\documentclass[english]{article}





    \usepackage[nonatbib,preprint]{neurips_2023}

\usepackage{float}
\usepackage{algorithm,algorithmic}
\usepackage{amsmath}
\usepackage{amssymb}
\usepackage{graphicx}
\usepackage{bm}
\usepackage{amsthm}
\usepackage{subcaption}
\usepackage{enumitem}
\usepackage[title]{appendix}


\usepackage{lmodern}
\usepackage[utf8]{inputenc} 
\usepackage[T1]{fontenc}    
\usepackage{url}            
\usepackage{booktabs}       
\usepackage{amsfonts}       
\usepackage{nicefrac}       
\usepackage{microtype}  
\usepackage{mathrsfs}
\usepackage{epstopdf}

\numberwithin{equation}{section}
\usepackage{hyperref}
\hypersetup{
	colorlinks=true,
	linkcolor=red,
	filecolor=magenta,      
	citecolor=blue,
}
\urlstyle{same}

\newtheorem{lem}{Lemma}
\newtheorem{prop}{Proposition}

\newtheorem{defi}{Definition}

\numberwithin{theo}{section}
\numberwithin{lem}{section}
\numberwithin{prop}{section}
\numberwithin{alg}{section}
\numberwithin{assum}{section}
\numberwithin{defi}{section}
\numberwithin{coro}{section}

\theoremstyle{remark}
\newtheorem{remark}{Remarks}
\numberwithin{remark}{section}

\usepackage{stfloats}
\usepackage{wrapfig}

\usepackage{babel}

\usepackage{mathtools}
\def\Tiny{\fontsize{4pt}{4pt}\selectfont}
\newcommand*{\eqdef}{\ensuremath{\overset{\mathclap{\text{\Tiny def}}}{=}}}

\def\Tiny{\fontsize{4pt}{4pt}\selectfont}

\title{A Generic Closed-form Optimal Step-size for ADMM}
\date{}
\author{
	Yifan Ran,\,  Wei Dai \\
	Department of Electrical and Electronics Engineering\\
	Imperial College London\\
	\texttt{\{y.ran18,wei.dai1\}@imperial.ac.uk}\\}

\begin{document}
	\maketitle

	\begin{abstract}			
		In this work, we present a generic step-size choice for the ADMM type proximal algorithms. It admits a closed-form expression and is theoretically optimal with respect to a worst-case convergence rate bound. It is simply given by the ratio of Euclidean norms of the dual and primal solutions, i.e., $ ||\bm{\lambda}^\star|| / ||\bm{x}^\star||$. Numerical tests show that its practical performance is  near-optimal in general. The only challenge is that such a ratio is not known a priori and we provide two strategies to address it. The derivation of our step-size choice is based on studying the  fixed-point structure of ADMM using the proximal operator. However, we demonstrate that the classical proximal operator definition contains an input scaling issue. This leads to a scaled step-size optimization problem which would yield a false solution. 
		Such a issue is naturally avoided by our proposed new definition of the proximal operator. A series of its properties is established.
	\end{abstract}

	\section{Introduction}
	The choice of the step-size is well-known to have a significant impact on an algorithm's convergence rate. A well-chosen step-size significantly reduces computational time. This consideration is amplified as the size of data and models grow, resulting in large computational cost. Manual tuning of the step-size via brute force techniques, such as a grid search, adds additional overhead, and does not generally translate to alternative problem formulations or data sets. In this work, we establish a generic step-size choice for solving problem of the form $ f(\bm{x}) + g(\bm{x}) $ via Alternating  Direction Method of Multipliers (ADMM) \cite{boyd12,gabay1976dual,glowinski1975approximation}. The step-size choice is theoretically optimal and admits a closed-form expression as  $ ||\bm{\lambda}^\star|| / ||\bm{x}^\star||$. We will see that this simple expression can provide insights and helps with further processing.
	
	In practice, neither the primal nor the dual solutions of ADMM  are known a priori, i.e., we do not know the ratio $ ||\bm{\lambda}^\star|| / ||\bm{x}^\star||$  in advance. For practical use, we then provide two approaches to estimate such a ratio. The first approach is to directly estimate $ ||\bm{\lambda}^\star||$  and $||\bm{x}^\star|| $. In a nutshell, this approach is to estimate $ ||\bm{\lambda}^\star|| $  from the objective function and approximate $||\bm{x}^\star|| $   by exploiting data structure.
	We will carefully illustrate this method  in the context of Semidefinite Programs (SDPs) using two well-known applications --- the Boolean quadratic program \cite{poljak1995recipe, lemarechal1999semidefinite, malick2007spherical} and  super-resolution  \cite{candes4,tang2013compressed}. 
	The second approach is to adopt an adaptive step-size based on current iterates  $ ||\bm{\lambda}^k|| / ||\bm{x}^k||$ in a manner similar to \cite{9904868, 7738878}.

	Our step-size result is limited to ADMM type methods. ADMM is widely used and can be shown equivalent to some other popular methods, particularly the Douglas-Rachford Splitting (DRS) \cite{lions1979splitting, douglas1956numerical, peaceman1955numerical,  eckstein1992douglas}, see the equivalence proof in \cite{eckstein1992douglas,fortin2000augmented}. Recently, the Primal-Dual Hybrid Gradient (PDHG) method \cite{chambolle2011first, esser2010general, pock2009algorithm} has also been shown to be equivalent to ADMM in  \cite{o2020equivalence}.

	Our step-size result is obtained by optimizing a worst-case convergence rate bound of ADMM. There exists extensive work related to this rate bound, see some earlier work in \cite{monteiro2013iteration,he20121,he2015non,nishihara2015general}, and more recently in \cite{deng2016global,francca2016explicit,davis2016convergence,davis2017faster,hong2017linear,giselsson2017tight,moursi2019douglas, ryu2021large}. Among them, an elegant way of proving it is by appealing to  fixed-point theory, see e.g. \cite[Sect. 2.4]{ryu2021large}. In this work, we present a new proof that perhaps the simplest since it only requires two steps --- first proving a strictly decreasing error property, and then obtain the rate via a simple geometric sum formula. Another contribution of our proof is that we introduce  a $ 1/L $-cocoercive assumption that easily generalize the basic result. We show that the  standard rate is a special case of $ L=1 $, and when $ L \in (0,1) $ we have a sharp rate. 
	
	The fixed-point structure of ADMM is key to obtaining our step-size result. It is related to the proximal operator. 
	The proximal operator is a standard tool for nonsmooth, constrained, large-scale, or distributed optimization problems \cite{proxi_algs}.  From a data analysis point of view, it introduces a unified framework that relates many seemly disparate algorithms, see e.g. \cite{bauschke2017convex, combettes2005signal, ryu2016primer}. Many well-known algorithms can be written in terms of the proximal operator, and  are referred to as the proximal algorithms \cite{proxi_algs, rockafellar1976monotone}. ADMM can be classified as a primal-dual type proximal algorithm. 
	
	
	The main challenge we encounter is that the classical ADMM fixed-point expression $ \bm{x}^\star + \bm{\lambda}^\star /\gamma $ is in fact a scaled version, where $ \gamma $ denotes the step-size. The unscaled expression should be $ \sqrt{\gamma}\bm{x}^\star  + \bm{\lambda}^\star /\sqrt{\gamma} $. There exists a factor of $ \sqrt{\gamma} $ difference and we  demonstrate that it originates from the classical proximal operator definition. If we ignore such a scaling effect, we would obtain a scaled step-size optimization problem with a false solution. This issue is not obvious at all. We reveal it by introducing a new proximal operator definition. It is mathematically equivalent to the classical one, but naturally avoid such a scaling issue. 
	Moreover, we find that the new definition would yield a simpler Moreau decomposition expression than the classical one, which is also related to the scaling effect.
	We will establish a series of  properties of the new definition at the beginning of this paper.

	\textbf{Optimality:} Our proposed step-size choice is optimal in the sense that a worst-case convergence rate bound is optimized. It is an upper bound of the underlying actual convergence rate. Let us note that the actual convergence rate is intractable since it is characterized by the sum of all iterates $ \sum_{k} \Vert \bm{\psi}^{k+1} - \bm{\psi}^k \Vert^2 $. Therefore, the best we can expect is that our theoretical result from an upper bound is practically near-optimal. Numerical results show that our proposed step-size choice brings near-optimal practical performance in general. 
	
	\textbf{Limitations:} (i) The two approaches to estimate the theoretical optimal step-size have limitations.  
	The  current iterates estimator $ ||\bm{\lambda}^k|| / ||\bm{x}^k||$ works very nicely in practice and has roughly the same iteration number complexity performance as the theoretical optimal step-size choice. However,  due to that it is an adaptive step-size, caching factorizations is no longer possible (see a discussion in \cite[Sec. 4.2.3]{boyd12}). This may significantly increase the computational time of the ADMM for certain applications with large data. Also, it is an empirical approach and lack of theoretical analysis. Therefore, we do not have insights on the elements that affect the optimal step-size choice. 
	The other approach using direct estimation relies on a priori knowledge of either one of the two gradients   $ \nabla f, \nabla g $ to approximate $ \bm{\lambda}^\star$.  That said, this approach, at least currently, is limited to problems with at least one linear objective function, since in which case the gradient is a known constant. SDPs by definition meet this requirement. (ii) Our optimal step-size result is only valid for zero initialization, which is the most common choice for ADMM.
	
	\textbf{Key contributions:} (i) A new definition of proximal operator and its properties; (ii) A new generalized convergence rate and a simple proof;
	(iii) A novel generic closed-form optimal step-size for ADMM; (iv) Two approaches for estimating the optimal step-size choice.  		
	
	\textbf{Related work:} Step-size selection is an important topic and there exists extensive work in this field.  
	In \cite{nishihara2015general},  the authors  consider ADMM as a dynamic system. There, one first obtains a convergence rate bound by solving an SDP. Then, a grid search over the parameter space is  conducted to find the parameter choice that minimizes the rate bound. In \cite{xu2017adaptive, xu2017relaxed}, the authors adapt the successful Barzilai-Borwein spectral method for gradient descent and derive adaptive parameter choices for ADMM. 
	Recently,  \cite{ryu2020operator} proposed a general framework that computationally selects the optimal parameter by solving a series of SDPs.  
	In \cite{ghadimi2014optimal,teixeira2013optimal}, the authors  consider the quadratic program (QP), and the optimal ADMM parameters are derived under several specific settings.  \cite{shi2014linear} considers the decentralized consensus problems,  where strong assumptions, such as  Lipschitz continuity and the strong convexity  are required. 
	Our result does not require any strong assumption and admits a generically applicable closed-form expression. To our best knowledge, it is completely new in the literature.

	\section{New Proximal Operator}\label{sec_2} 
	Here, we first introduce a new  definition of the  proximal operator. It can be viewed as a symmetric definition of the classical one. The key difference is that it will not scale its input. This leads to a simpler Moreau decomposition expression and also avoids an unobvious fixed-point scaling issue when optimize the ADMM step-size.

	To start, recall the classical definition of the proximal operator 
	\begin{defi}\label{prox_lam}
		Given a CCP function $ f $, the classical proximal operator with a scalar parameter $\gamma > 0 $ is defined as
		\begin{equation}\label{lam_prox}
			\textbf{Prox}_{\gamma f}(\cdot) \eqdef  \underset{\bm{z}}{\text{argmin}} \,\,  \gamma f(\bm{z}) + \frac{1}{2}\Vert \bm{z} - \cdot\Vert^2
			= \underset{\bm{z}}{\text{argmin}} \,\,   f(\bm{z}) + \frac{1}{2\gamma}\Vert \bm{z} - \cdot\Vert^2,
		\end{equation}
		where $ \Vert \cdot \Vert $ denotes the Euclidean norm.
	\end{defi}	
	
	We propose the following new definition:
	\begin{defi}\label{new_def1}
		Given a CCP function $ f $  and a  parameter $\rho \neq 0 $, we propose the following new definition:
		\begin{equation}\label{def_right}
			\textbf{Prox}_{f \rho} (\cdot) \eqdef \underset{\bm{z}}{\text{argmin}} \,\, f(\rho\bm{z} )+ \frac{1}{2}\Vert \bm{z} - \cdot\Vert^2
			=\frac{1}{\rho}\,\underset{\bm{z}}{\text{argmin}} \,\, f(\bm{z})+ \frac{1}{2}\Vert \frac{\bm{z} }{\rho}- \cdot\Vert^2.
		\end{equation}
	\end{defi}
	From an operator theory view, the new definition uses a right-multiplication $ f \rho = f(\rho\,\cdot ) $ and the classical one uses a  left-multiplication $ \gamma f = \gamma f(\cdot ) $. In this sense, one may view them as a pair of symmetric definitions.
	
	The key difference of the above two definitions  is whether the input, denoted by $\cdot$, is scaled or not. To see this, first rewrite the classical definition into 
	$  \text{argmin} \, f(\bm{z}) + \frac{1}{2}\Vert (\bm{z} - \cdot)/\sqrt{\gamma}\Vert^2$. Clearly, the input is scaled by a factor of $ \sqrt{\gamma} $. On the other hand, for the new definition $ 1/\rho \,\,{\text{argmin}} \, f(\bm{z})+ \frac{1}{2}\Vert {\bm{z} }/{\rho}- \cdot\Vert^2$, the input is clearly not scaled. The existence of the scaling  can also be seen from the following translation rule:	
	\begin{lem}\label{lem_rel}
		The two  types of proximal operators above are related through
		\begin{equation}\label{eq_rel}
			\textbf{Prox}_{f \rho}(\bm{v}) = \frac{1}{\rho}\textbf{Prox}_{\rho^2f} (  \rho{\bm{v} } ),\quad
			\textbf{Prox}_{\gamma f}(\bm{v}) = \sqrt{\gamma}\textbf{Prox}_{f\sqrt{\gamma}} ( \frac{\bm{v}}{\sqrt{\gamma}} ).
		\end{equation} 		
	\end{lem}
	
	For concreteness, we establish the following property for the new definition:
	\begin{prop}\label{prop_firm}
		$\textbf{Prox}_{f\rho}$ as defined in \eqref{def_right} is firmly nonexpansive and single-valued.	
	\end{prop}
	
	\subsection{Generalization to an operator parameter}
	The parameter of the proximal operator is also the step-size for ADMM and it plays a key role in the convergence rate issue.	
	It is possible to use a generalized parameter, which in principle can provide faster convergence rate than the scalar case.
	
	For generality, we discuss an operator parameter, which can be reduced to a matrix case.
	We start with the classical definition.
	\begin{defi}
		Given a CCP function $ f $ with a positive definite operator $ \mathcal{M} \succ 0  $, the classical proximal operator can be defined as
		\begin{equation}\label{H_prox}
			\textbf{Prox}_{\mathcal{M} f} (\cdot) \eqdef {\text{argmin}} \,\, f(\bm{z}) + \frac{1}{2}\Vert \bm{z} - \cdot\Vert^2_{\mathcal{M}^{-1}} ,
		\end{equation} 
		where	$ \Vert \cdot \Vert_{\mathcal{A}}  $ is a norm induced by the inner product $ \langle \cdot\,,\mathcal{A} \,\cdot \rangle$ with $ \mathcal{A} \succ 0  $.
	\end{defi}
	
	We propose the following new definition:
	\begin{defi}\label{new_def2}
		Given a CCP function $ f $ and a   bijective, linear and continuous operator $ \mathcal{S} $, we propose the following new definition:
		\begin{equation}\label{def_new}
			\textbf{Prox}_{f\mathcal{S}} (\cdot) \eqdef \underset{\bm{z}}{\text{argmin}} \,\, f(\mathcal{S}\bm{z})+ \frac{1}{2}\Vert \bm{z} - \cdot\Vert^2
			=  \mathcal{S}^{-1} \underset{\bm{z}}{\text{argmin}} \,\, f(\bm{z})+ \frac{1}{2}\Vert \mathcal{S}^{-1}\bm{z} - \cdot\Vert^2.
		\end{equation}
	\end{defi}
	The translation rule of the new and classical proximal operators is given by the following lemma.
	\begin{lem}\label{lem_rel2}
		Given a bijective, linear and continuous operator $ \mathcal{S}$, let $ \mathcal{M} \eqdef \mathcal{S}\mathcal{S}^* $. The classical and new proximal operators are related through
		\begin{equation}
			\textbf{Prox}_{f\mathcal{S}}( \bm{v} )  = \mathcal{S}^{-1} \textbf{Prox}_{\mathcal{M} f}(\mathcal{S}\bm{v}),\quad
			\textbf{Prox}_{\mathcal{M} f}(\bm{v}) = \mathcal{S}\textbf{Prox}_{f\mathcal{S}}( \mathcal{S}^{-1}\bm{v} ), 
		\end{equation} 	
		where 	$ \mathcal{S}^* $ denotes the adjoint operator of $ \mathcal{S} $.
	\end{lem}
	The new definition admits the following property: 
	\begin{prop}\label{prop_firm2}
		$\textbf{Prox}_{f\mathcal{S}}$ as defined in \eqref{def_new} is firmly nonexpansive and single-valued.	
	\end{prop}

	\subsection{An evaluation result}
	While a generalized parameter  could achieve faster convergence than a scalar parameter, its corresponding proximal operator is often hard to evaluate. Nevertheless, it is possible to find some special cases. Below, we propose  a  closed-form evaluation result.
	
	\begin{prop}\label{prop_evaluation}
		Let $\bm{P}$ be a full-rank square matrix with orthogonal columns. Then, 
		\begin{equation}
			\bm{P}^\text{T}\mathcal{T} ((\bm{P}^{-1})^\text{T}\bm{v}) 
			=  \underset{\bm{x}}{\text{argmin}} \,\, \Vert \bm{P}\bm{x} \Vert_1 + \frac{1}{2}\Vert\bm{x} - \bm{v} \Vert^2  = \textbf{Prox}_{f\bm{P}}(\bm{v}),
		\end{equation}
		where $ \mathcal{T}  $ is a soft thresholding operator,  defined as
		\begin{equation*}
			\mathcal{T} (\bm{u}) \eqdef \text{sgn}(\bm{u}) \odot \{|\bm{u}| -\bm{1}\}_+, 
		\end{equation*}	
		and where $ |\cdot| $ denotes the absolute value; $ \bm{1} $ denotes a ones vector; $ \{\cdot\}_+ $ is the operation of setting all negative entries to zeros.
	\end{prop}

	\subsection{Simplified Moreau decomposition}
	The following relation is known as Moreau decomposition:
	\begin{equation}\label{moreau_basic}
		\bm{v} = \textbf{Prox}_{f}(\bm{v}) + \textbf{Prox}_{f^*} (\bm{v}),
	\end{equation}
	where $ f^*(\cdot) \eqdef \sup\, \langle \bm{z}, \cdot \rangle - f(\bm{z})$ is the convex conjugate of $ f $. The above relation always holds and is the main relationship between proximal operators and duality \cite{proxi_algs}. 
	
	Its extension via the classical proximal operator is given by
	\begin{equation}\label{moreau_eq1}
		\bm{v} = \textbf{Prox}_{\gamma f}(\bm{v}) + \gamma\textbf{Prox}_{\frac{1}{\gamma}f^*} (\frac{1}{\gamma}\bm{v}).
	\end{equation}	
	Our proposed new proximal operator would yield
	\begin{equation}\label{moreau_eq2}
		\bm{v} = \textbf{Prox}_{f\rho} (\bm{v}) +  \textbf{Prox}_{f^*\frac{1}{\rho}} (\bm{v}).
	\end{equation}
	We see that the expression in \eqref{moreau_eq2} is simpler than the one in \eqref{moreau_eq1}. Moreover, there exists symmetry between the two components in  \eqref{moreau_basic}. Such a symmetry is lost in \eqref{moreau_eq1}, while kept in \eqref{moreau_eq2}. In this sense, we argue that \eqref{moreau_eq2} is a more natural extension than \eqref{moreau_eq1}.

	\section{ADMM Algorithm}

	ADMM is a typical proximal algorithm. Here, we consider problem $   f(\bm{x}) + g(\bm{x}) $. 	
	To solve it using ADMM,  we first rewrite it into a constrained form as
	\begin{align}\label{opt_problem}
		\underset{\bm{x},\bm{z}}{\text{minimize}}\,\,\quad &   f(\bm{x}) + g(\bm{z}),\nonumber\\
		\text{subject\,to}\:\quad & \:\,\,\,\bm{x} - \bm{z} = 0.
	\end{align}
	One can use either one of the following augmented Lagrange functions, which correspond to the classical and the new proximal operators:
	\begin{equation}\label{lagrangian_1}
		\mathcal{L}_\gamma(\bm{x},\bm{z};\bm{\lambda}) 
		\,\eqdef\,  f(\bm{x}) + g(\bm{z}) + \langle \bm{\lambda}, \bm{x} - \bm{z}\rangle + \frac{\gamma}{2}\Vert\bm{x} - \bm{z}\Vert^2, \quad \gamma > 0. 
	\end{equation}	
	\begin{equation}
		\mathcal{L}_\rho(\bm{x},\bm{z};\bm{\lambda}) 
		\,\eqdef\,  f(\bm{x}) + g(\bm{z}) + \langle \bm{\lambda}, \bm{x} - \bm{z}\rangle + \frac{1}{2}\Vert\rho (\bm{x} - \bm{z})\Vert^2, \,\, \rho \neq 0. 
	\end{equation}
	For a better connection to the current literature, we use the first Lagrangian with step-size $\gamma>0$ throughout the rest of the paper.
	The ADMM primal iterates $ \bm{x} $ and $ \bm{z} $ are obtained by minimizing $ \mathcal{L}_\gamma $ and the dual iterate $ \bm{\lambda} $ is obtained via a dual ascent manner. Specifically, the ADMM iterates are given by
	\begin{align*}
		\text{Classi}& \text{cal iterates} \hspace{15em}   \text{New iterates} \\
		\bm{x}^{k+1} =\,\,&\textbf{Prox}_{\frac{1}{\gamma}f}(\bm{z}^{k}-\bm{\lambda}^{k}/\gamma),			\quad\quad\quad\quad\quad	
		{\bm{x}}^{k+1} =\,\,  \frac{1}{\sqrt{\gamma}}\textbf{Prox}_{f\frac{1}{\sqrt{\gamma}}}(\sqrt{\gamma}{\bm{z}}^{k}-{\bm{\lambda}}^{k}/\sqrt{\gamma}),\\				
		\bm{z}^{k+1} =\,\, &\textbf{Prox}_{\frac{1}{\gamma}g}(\bm{x}^{k+1} + \bm{\lambda}^{k}/\gamma), 	\quad\Longleftrightarrow\quad 	
		{\bm{z}}^{k+1} =\,\,  \frac{1}{\sqrt{\gamma}}\textbf{Prox}_{g\frac{1}{\sqrt{\gamma}}}(\sqrt{\gamma}{\bm{x}}^{k+1} + {\bm{\lambda}}^{k}/\sqrt{\gamma}),\\
		\bm{\lambda}^{k+1} =\,\,  &\bm{\lambda}^{k} + \gamma(\bm{x}^{k+1} - \bm{z}^{k+1}) 				 \quad\quad\quad\quad\quad\,
		{\bm{\lambda}}^{k+1} =\,\,  {\bm{\lambda}}^{k} + \gamma({\bm{x}}^{k+1} - {\bm{z}}^{k+1})\\
		&\big\downarrow \hspace{22em} \big\downarrow\\
		\text{Fixed-point}&\,\, \text{Characterization} \hspace{9em}   \text{Fixed-point Characterization} \\
		\bm{\psi}^{k+1} \eqdef \,&\bm{x}^{k+1} + \bm{\lambda}^{k}/\gamma= \mathcal{F}_1\bm{\psi}^{k}	 \hspace{6em}
		\bar{\bm{\psi}}^{k+1} \eqdef \sqrt{\gamma}{\bm{x}}^{k+1} + {\bm{\lambda}}^{k}/\sqrt{\gamma} = \mathcal{F}_2\bar{\bm{\psi}}^{k}	
	\end{align*}
	where $ \mathcal{F}_1 \eqdef  \frac{1}{2}\mathcal{I} + \frac{1}{2}(2\textbf{Prox}_{\frac{1}{\gamma}f}- \mathcal{I}) \circ (2\textbf{Prox}_{\frac{1}{\gamma}g} - \mathcal{I}) $, \,
	$ \mathcal{F}_2 \eqdef  \frac{1}{2}\mathcal{I} + \frac{1}{2}(2\textbf{Prox}_{f\frac{1}{\sqrt{\gamma}}}- \mathcal{I}) \circ (2\textbf{Prox}_{g\frac{1}{\sqrt{\gamma}}} - \mathcal{I}) $.
	
	We see that for variables $ \bm{x,z,\psi} $, the new iterates have proximal inputs of the form  $ \sqrt{\gamma}\,(\cdot) \pm (\cdot)/\sqrt{\gamma}$, while the classical one gives $(\cdot) \pm (\cdot)/{\gamma}$. This difference is the scaling issue that will be carefully discussed in the next section.

	\section{Optimal step-size}
	
	Here, we present a theoretically optimal  step-size  obtained by minimizing a worst-case convergence rate bound. The convergence rate is characterized by the ADMM fixed-point $\bm{\psi}$. First, we propose the following rate characterization:
	\begin{prop}\label{prop_convergence}
		Suppose an operator $ \mathcal{F} $  is firmly nonexpansive. For a set of fixed-point iterates $ \{\bm{\psi}^k\} $  that converges to  $\bm{\psi}^\star\in \text{Fix} (\mathcal{F}) $, it admits a non-ergodic worst-case convergence rate as
		\begin{equation}\label{bd1}
			\Vert \bm{\psi}^{k+1} - \bm{\psi}^k  \Vert^2 \leq \frac{1}{k+1} \Vert \bm{\psi}^\star - \bm{\psi}^0  \Vert^2.
		\end{equation}
		Furthermore, suppose $ \mathcal{F} $ is $1/L$-cocoercive with  $ L \in (0,1) $. Then, the above rate is improved to 	
		\begin{equation}\label{bd2}
			\Vert \bm{\psi}^{k+1} - \bm{\psi}^k  \Vert^2 \leq \big(\frac{L}{2-L}\big)^k \cdot\frac{ 1 - \frac{L}{2-L}}{1 -(\frac{L}{2-L})^{k+1} }\Vert \bm{\psi}^\star - \bm{\psi}^0  \Vert^2.
		\end{equation}
	\end{prop} 
	Let us note that \eqref{bd1} is a special case of $ L=1 $. To our knowledge, such a generalization using $1/L$-cocoercive assumption is new in the literature. Also, our proofs of   \eqref{bd1} and \eqref{bd2} are new and simple.

	The initialization $ \bm{\psi}^0  $ can be arbitrarily chosen, but the convergence rate can also be arbitrarily slow. The default choice is $ \bm{\psi}^0 = 0$ and we will limit our discussion to this case. 
	
	Since different $\gamma$ yields different set of fixed-point iterates $ \{\bm{\psi}^k\}  $,  we aim to find the iterates that give the smallest upper bound. Clearly, the iteration number $ k $ and Lipschitz constant $ L $ related terms are constant when we optimize $\gamma$. That said, to achieve the purpose, we only need to minimize $ \Vert\bm{\psi}^\star\Vert^2 $ w.r.t. $\gamma$.
	
	\subsection{The failure of the classical definition}
	As shown in previous section, the classical proximal operator gives an ADMM fixed-point as $  \bm{\psi}^\star = \bm{x}^\star+ \bm{\lambda}^\star/ \gamma$. If we directly minimize its Euclidean norm, we obtain 
	\begin{equation}\label{prob1}
		\underset{\gamma>0}{\text{minimize}} \,\: \Vert \bm{x}^\star+ \bm{\lambda}^\star/\gamma \Vert^2
		\,\,= \underset{\gamma>0}{\text{minimize}} \,\: \frac{1}{\gamma^2}\Vert\bm{\lambda}^\star \Vert^2 + \frac{2}{\gamma}\langle \bm{x}^\star, \bm{\lambda}^\star \rangle.
	\end{equation}
	Let us note that the objective function above is non-convex in nature. To our knowledge, there is no general closed-form solution available (unless  the sign of $ \langle \bm{x}^\star, \bm{\lambda}^\star \rangle $ is  negative, which cannot be guaranteed).
	
	Alternatively, suppose we redefine the augmented term of the ADMM Lagrangian as in \eqref{lagrangian_1} into $ \frac{1}{2\gamma}\Vert\bm{x} - \bm{z}\Vert^2 $. Then, similar to the above process, we obtain a convex optimization problem
	\begin{equation}\label{prob2}
		\underset{\gamma>0}{\text{minimize}} \,\: \Vert \bm{x}^\star+ \gamma\bm{\lambda}^\star \Vert^2
		\,\,= \underset{\gamma>0}{\text{minimize}} \,\: {\gamma^2}\Vert\bm{\lambda}^\star \Vert^2 + 2{\gamma}\langle \bm{x}^\star, \bm{\lambda}^\star \rangle,
	\end{equation}
	Ignore $ \gamma>0 $ for now and we would obtain
	\begin{equation}
		\gamma^\star = \frac{-\langle \bm{x}^\star, \bm{\lambda}^\star \rangle}{\Vert\bm{\lambda}^\star \Vert^2}.
	\end{equation}
	Let us note that the numerator  $ -\langle \bm{x}^\star, \bm{\lambda}^\star \rangle $ can be positive, negative or even zero (the zero case holds for all SDPs, see appendix). For the  negative and zero cases, to enforce constraint $ \gamma>0 $,  a feasible solution would be $ 0 + \epsilon $ where $ \epsilon $ is an infinitely small positive number. That said, the optimal step-size should be chosen to be infinitely small in absolute value. This is clearly against any practical simulation.
	
	Also, problem \eqref{prob1} and \eqref{prob2} does not share the same optimal solution in general. This should not happen since there is no fundamental difference between the two definitions $ \frac{\gamma}{2}\Vert\cdot\Vert^2 $ and $ \frac{1}{2\gamma}\Vert\cdot\Vert^2  $. Therefore, we conclude that something must be wrong. 
	
	The above issues will be naturally avoided by using our proposed new proximal operator. When we compare the classical and the new definitions, we realize the above issues come from the input scaling effect. 
	Specifically, by definition $  \textbf{Prox}_{\frac{1}{\gamma}f}(\cdot)  =  {\text{argmin}} \,\,   f(\bm{z}) + \frac{1}{2}\Vert (\bm{z} - \cdot)/\sqrt{\gamma}\Vert^2$, which implies that the input is scaled into $ (\cdot)/\sqrt{\gamma} $. That said, we are actually dealing with  $ \Vert\bm{\psi}^\star/\sqrt{\gamma}\Vert^2 $ in \eqref{prob1}, and  $ \Vert\sqrt{\gamma}\bm{\psi}^\star\Vert^2 $ in \eqref{prob2}. Such extra scaling factors are introduced by the classical proximal operator definition and has nothing to do with ADMM.
	We should remove it when finding the ADMM optimal step-size. This yields  $ \Vert  \sqrt{\gamma}{\bm{x}}^\star + {\bm{\lambda}}^\star/\sqrt{\gamma} \Vert^2 $, which is exactly what happens when we use the new proximal operator as in the following section.

	\subsection{The optimal step-size}
	The new proximal operator gives an ADMM fixed-point as $ \sqrt{\gamma}{\bm{x}}^\star + {\bm{\lambda}}^\star/\sqrt{\gamma}$. This gives
	\begin{equation}
		\underset{\gamma}{\text{minimize}}  \,\: \Vert  \sqrt{\gamma}{\bm{x}}^\star + {\bm{\lambda}}^\star/\sqrt{\gamma} \Vert^2
		\,\,= \underset{\gamma>0}{\text{minimize}} \,\: \gamma\Vert\bm{x}^\star \Vert^2 + \frac{1}{\gamma}\Vert\bm{\lambda}^\star\Vert^2,
	\end{equation}
	where the minimum is obtained if and only if $ \gamma\Vert\bm{x}^\star \Vert^2 = \frac{1}{\gamma}\Vert\bm{\lambda}^\star\Vert^2 $. This gives
	\begin{equation}\label{scalar_choice}
		\gamma^\star = \frac{\Vert\bm{\lambda}^\star\Vert}{\Vert\bm{x}^\star\Vert}. 
	\end{equation}
	Clearly, the above $ \gamma^\star $ naturally meets the positivity requirement and is therefore a feasible solution. 
	\subsection{Estimation}
	In general, the optimal step-size result in \eqref{scalar_choice} is not a priori knowledge and we provide two ways to handle it. First is to adopt an adaptive step-size based on current iterates  $ ||\bm{\lambda}^k|| / ||\bm{x}^k||$ in a manner similar to \cite{9904868, 7738878}. 
	The other approach is to directly estimate $ \Vert\bm{\lambda}^\star\Vert$ and $\Vert\bm{x}^\star\Vert $, which will be the key content of the later section on pre-fixed step-size. 
	It uses the following generic result:
	\begin{lem}\label{prop_gradient}
		Let $ (\bm{x}^\star, \bm{\lambda}^\star)$ be the  primal-dual optimal solution pair for   ADMM solving problem $ f(\bm{x}) + g (\bm{x}) $. We have
		\begin{equation}
			\bm{\lambda}^{\star} = -\widetilde{\nabla} f (\bm{x}^\star) = \widetilde{\nabla} g (\bm{x}^\star),
		\end{equation}
		where  $ \widetilde{\nabla}  $ denotes the actual choice of gradient.
	\end{lem}

	\subsection{Generalization to f(Ax) + g(x)}
	The optimal step-size result in \eqref{scalar_choice} can be easily generalized to problem $ f(\bm{Ax}) + g(\bm{x}) $. To extend, all we need to ask is how the ADMM fixed-point changes. 
	Suppose we do variable substitution via $ \bm{z} \eqdef \bm{Ax}$ (recall \eqref{opt_problem}), then  the ADMM fixed-point is given by $ \bm{\psi}^\star =  \sqrt{\gamma}{\bm{Ax}}^\star + {\bm{\lambda}}^\star/\sqrt{\gamma} $. It follows that  
	\begin{equation}
		\gamma^\star = \frac{\Vert\bm{\lambda}^\star\Vert}{\Vert\bm{Ax}^\star\Vert}. 
	\end{equation}
	Other extensions are straightforward by viewing the ADMM fixed-point structure.

	
	\subsection{Generalization to a matrix step-size}
	The optimal step-size result in \eqref{scalar_choice} can be generalized to a matrix case, which would provide better performance in principle. 
	\begin{prop}\label{prop_matrix_step}
		Let $ \bm{M} \eqdef \text{Diag}(\bm{d})$ be the step-size, where $ \bm{d} $ is a positive vector and  $ d_i $ denotes its $ i $-th element.
		We have
		\begin{equation}\label{m_step}
			\bm{M}^\star = \text{Diag}(\bm{d}^\star), \qquad
			d_i^\star = \begin{cases}
				\vert\lambda_i^\star/x_i^\star\vert  &  \quad{x}_i^\star \neq 0, \\
				+\infty & 	\quad{x}_i^\star = 0,
			\end{cases} 
		\end{equation}
		where $ {x}_i^\star $ and $ \lambda_i^\star $ are the $ i $-th entries of the primal and dual optimal solutions, respectively, and where $ \text{Diag}(\cdot) $ denotes creating a diagonal matrix from an input vector.
	\end{prop}
	

	\section{Pre-fixed step-sizes for SDPs}
	Semidefinite program (SDP) considers problem: $ {\text{minimize}}\,  f(\bm{X}), s.t.  \,\bm{X}\succeq 0$, where $ f $ is linear, and where $ \bm{X} \in \mathbb{R}^{N\times N} $ is symmetric.
	It can be written into a $ f + g $ form as 
	\begin{equation}\label{sdp_pro}
		\text{minimize}\,\,  f(\bm{X}) + \delta_{\mathbb{S}_+^N}(\bm{X}).
	\end{equation}
	where $ \delta_{\mathbb{S}_+^N}  $  is an indicator function w.r.t.   the N-dimensional positive semidefinite cone $ \mathbb{S}_+^N $.
	For SDP, the optimal step-size in \eqref{scalar_choice} is given in a matrix form as $ \gamma^\star = {\Vert\bm{\Lambda}^\star\Vert}/{\Vert\bm{X}^\star\Vert} $. We will show that such a $ \gamma^\star $ can be well approximated into a pre-fixed choice in the context of SDP. Particularly, SDP is known to have an extremely high computational cost, and therefore our result could be of great practical interests. Below, we illustrate our approximation scheme through two well-known applications.

	\subsection{Boolean quadratic program}\label{bqp}
	Boolean quadratic program (BQP)  considers problem: $ {\text{minimize}}\, \Vert \bm{A}\bm{x} - \bm{b}  \Vert^2, s.t.  \,{x}_i^2 = 1, \, \forall i$. 
	However, this formulation is NP-hard. A common practice is to translate it into an SDP relaxation, which can be written in a compact form as	
	\begin{align}\label{r_BQP}
		\underset{\bm{X}}{\text{minimize}}\quad  & \langle \bm{X}, \bm{C} \rangle + \delta_{\mathbb{S}^{N+1}_+}(\bm{X})  \nonumber\\
		\text{subject\,to}\quad 
		&  \text{diag}(\bm{X})= \bm{1}_{(N+1)\times 1}, 
	\end{align}
	where $ \bm{X} \eqdef  \left[\begin{array}{cc} \bm{X}_1 & \bm{x}\\ \bm{x}^\text{T}  & 1 \end{array} \right]$, 
	$ \bm{C} \eqdef  \left[\begin{array}{cc} \bm{A}^\text{T}\bm{A} & -\bm{A}^\text{T}\bm{b} \\ -\bm{b}^\text{T}\bm{A}  & 0 \end{array} \right]$, 
	$\bm{X}_1\in \mathbb{R}^{N\times N} , \bm{x} \in \mathbb{R}^{N}, \bm{A} \in \mathbb{R}^{K\times N} $, $ \bm{b} \in \mathbb{R}^{K} $.
	
	We aim to estimate the optimal step-size choice $ \gamma^\star =  \Vert\bm{\Lambda}^\star\Vert/ \Vert \bm{X}^\star \Vert $. First, we have 
	\begin{equation}
		\sqrt{N+1} \leq \Vert\bm{X}^\star\Vert \leq N+1,
	\end{equation}
	where the upper bound is attained when $ \bm{X}^\star $ is rank-1 (in which case $ \bm{X}_1^\star = \bm{x}\bm{x}^\text{T}$ and the SDP relaxation is tight), and the lower bound is attained when  $ \bm{X}^\star $ has $ (N+1) $ equal  eigenvalues.

	For $ \bm{\Lambda}^\star $, standard Lagrange analysis gives $ \bm{\Lambda}^{\star} = -\bm{C} - \text{Diag}(\bm{\mu}^\star) $ (see appendix). 
	Empirically, we find  that  $ \text{Diag}(\bm{\mu}^\star) $ is not a dominant term, and one may use $ {\Vert\bm{C}\Vert}$ to replace the unknown $ \Vert\bm{\Lambda}^\star\Vert $. 
	We arrive at
	\begin{equation}\label{prefix_bqp}
		\gamma_{prefix} = \frac{\Vert\bm{C}\Vert}{d}, 
	\end{equation}
	where $ d \in [\sqrt{N+1}, N+1] $. 
	
	
	Let us note that the above SDP formulation is a relaxation of the original BQP. That said, if the relaxation is too loose, the result is almost meaningless to the original BQP. Since the upper bound of $ d $ stands for tight relaxation, we can directly choose $ d $ as $ N+1 $ for simplicity. Unless one intentionally deals with a very loose SDP relaxation,  in which case one may choose $ d  = \sqrt{N+1} $. Otherwise, 
	$ \gamma_{prefix} = {\Vert\bm{C}\Vert}/(N+1)$ should be a  good choice in general.

	\subsection{Super-resolution}\label{sr}
	
	Super-resolution (SR) problem deals with  model  $ x_n^\star = \sum_{k}c_{k}e^{-i2\pi n\tau_{k}}, \: n =0, 1,\dots, N-1 $. The available data  is  a subset of the collection $ \{ x_n^\star \} $ denoted by  $ \{x_j^\star\}_{j\in\Omega} $.
	The problem can be written into the following compact SDP formulation:
	\begin{align}
		\underset{\bm{X}}{\text{minimize}}\quad  & \langle \bm{X}, \bm{C} \rangle  + \delta_{\mathbb{S}^{N+1}_+}(\bm{X})   \nonumber\\
		\text{subject\,to}\quad 
		&  x_j =  x_j^\star, \: \forall j\in\Omega  \nonumber
	\end{align}
	where $ \bm{X} \eqdef \left[\begin{array}{cc}\mathcal{T}(\bm{u}) & \bm{x}\\\bm{x}^\text{H} & t\end{array}\right]$, 
	$ \bm{C} \eqdef  \left[\begin{array}{cc} \frac{1}{2N}\bm{I}_N & 0 \\ 0  &  \frac{1}{2} \end{array} \right]$, $\mathcal{T}(\bm{u}) \in \mathbb{R}^{N\times N} $,  $ \bm{u},\bm{x} \in \mathbb{R}^{N} $, $ t \in \mathbb{R} $,
	and where $ \bm{I}_N $ denotes an identity matrix in $ \mathbb{R}^{N\times N} $, $ \mathcal{T} $ is a Toeplitz operator that maps an input vector to a Toeplitz matrix.
	
	We aim to estimate the optimal step-size choice  $ \gamma^\star =  \Vert\bm{\Lambda}^\star\Vert/ \Vert \bm{X}^\star \Vert $. First, we have
	\begin{equation}
		(N+1)\sqrt{K} m_{avg}  \le \Vert\bm{X}^\star\Vert < (N+1)K m_{avg} ,
	\end{equation}
	where $ m_{avg} \eqdef \frac{1}{K}\sum_{k=1}^K|c_{k}| $ denotes the average magnitude of the spikes $ \bm{c} $. The  upper bound corresponds to $ \bm{X}^\star $ has only one eigenvalue, which is not attainable since by definition rank$(\bm{X}^\star)=K$. The lower bound is obtained when $ \bm{X}^\star $ has $ K $ equal eigenvalues.

	For $ \bm{\Lambda}^\star $, standard Lagrange analysis gives  $ \bm{\Lambda}^{\star} = -\bm{C} - \bm{M}_{\Omega} $ (see appendix). 
	Empirically, we find  that  $ \bm{M}_{\Omega} $ is not a dominant term, and one may use $ {\Vert\bm{C}\Vert}$ to replace the unknown $ \Vert\bm{\Lambda}^\star\Vert $. 
	We arrive at
	\begin{equation}\label{prefix_sr}
		\gamma_{prefix} =\frac{\Vert \bm{C} \Vert}{d m_{avg}},
	\end{equation}
	where $ d \in [(N+1)\sqrt{K}, (N+1)K) $.

	Suppose the elements of the spike amplitude $ \bm{c} $ is randomly generated from i.i.d. Gaussian distribution  $ \mathcal{N}(0,\sigma) $. Then,  its absolute value $ |c| $  follows the folded normal distribution \cite{leone1961folded} with  mean value  $ \sigma\sqrt{2/\pi} $. Therefore, we recommend setting $ m_{avg} = \sigma\sqrt{2/\pi} $ in this case. Numerically, we find that the eigenvalue distribution of $ \bm{X} $ are not concentrated in our tests. This suggest choosing the upper bound of $ d $ is not a good idea. 
	For simplicity, we then directly use the lower bound $ d  = (N+1)\sqrt{K} $. This gives  $ \gamma_{prefix} = {\Vert\bm{C}\Vert}/((N+1)\sqrt{K}\sigma\sqrt{2/\pi})$.

	\section{Numerical results}
	In this section we evaluate the practical performance of different choices of step-sizes. We would compare these choices to the underlying best choice, which is obtained by exhaustive search. The performance is evaluated by the iteration number complexity. We would stop the algorithm and record its iteration number if a mean squared error threshold of  $ 1 \times 10^{-10} $ is reached. We calculate the error using the `ground truth' generated from the solver CVX \cite{grant11} in a best precision mode.
	
	Apart from the two SDP applications from previous section, we also test another 4 popular applications adopted from \cite{boyd12}. We will consider (i) Lasso: minimize $  1/2|| \bm{Ax - b} ||_2^2 + \alpha || \bm{x} ||_1 $;
	(ii) Least absolute deviations (LAD): minimize  $ 1/2|| \bm{Ax} - \bm{b} ||_1 $; (iii) Quadratic programming (QP): minimize $ 1/2 || \bm{Ax - b} ||_2^2$,
	s.t.  $a \leq \bm{x} \leq b $; (iv) Total Variation (TV): minimize  $ (1/2)||\bm{x - b}||_2^2 + \alpha \sum_i |x_{i+1} - x_i| $.

	\begin{figure}[H]
		\begin{center}
			\begin{subfigure}{0.3\textwidth}
				\includegraphics[scale=0.25]{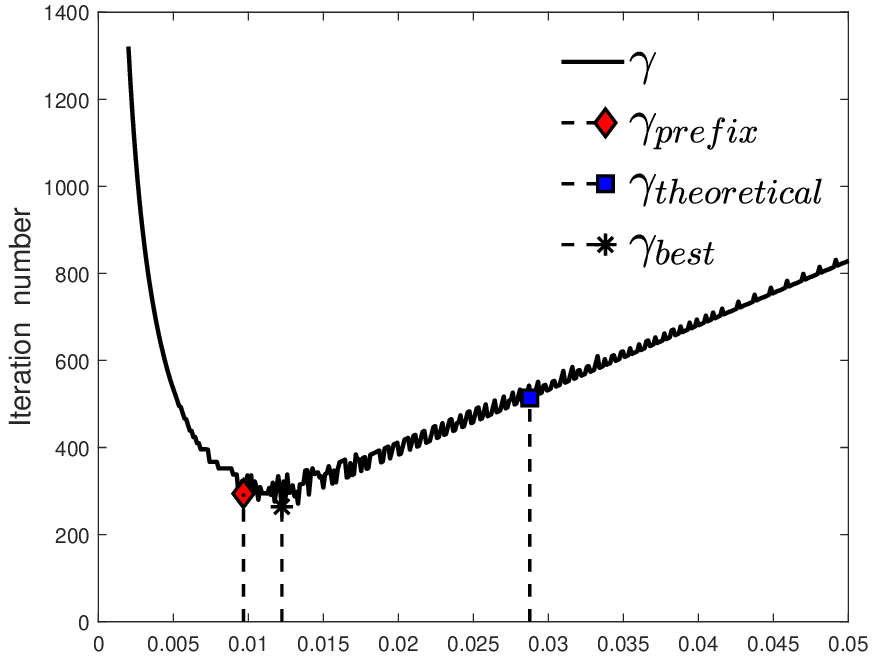} 	
				\caption{BQP}
				\label{1a}	
			\end{subfigure}
			\begin{subfigure}{0.3\textwidth}	
				\includegraphics[scale=0.25]{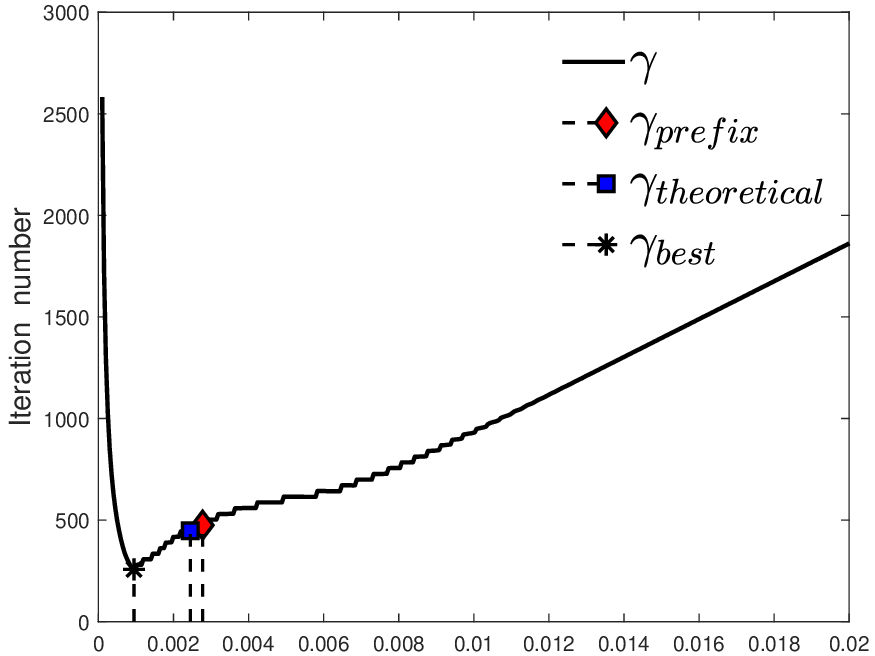} 	
				\caption{SR}
				\label{1b}
			\end{subfigure}
			\begin{subfigure}{0.3\textwidth}	
				\includegraphics[scale=0.25]{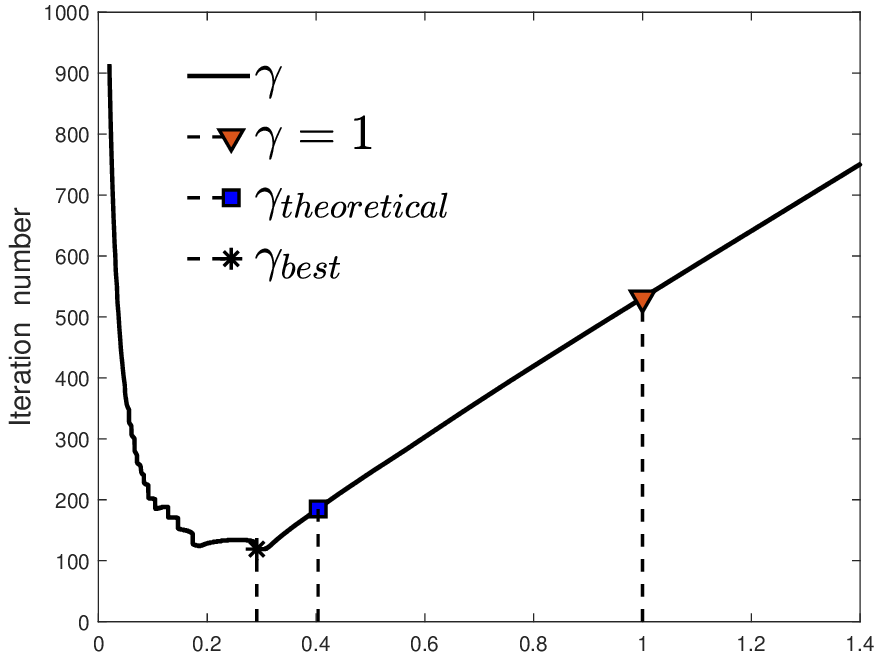} 	
				\caption{Lasso}
			\end{subfigure}
			\begin{subfigure}{0.3\textwidth}	
				\includegraphics[scale=0.25]{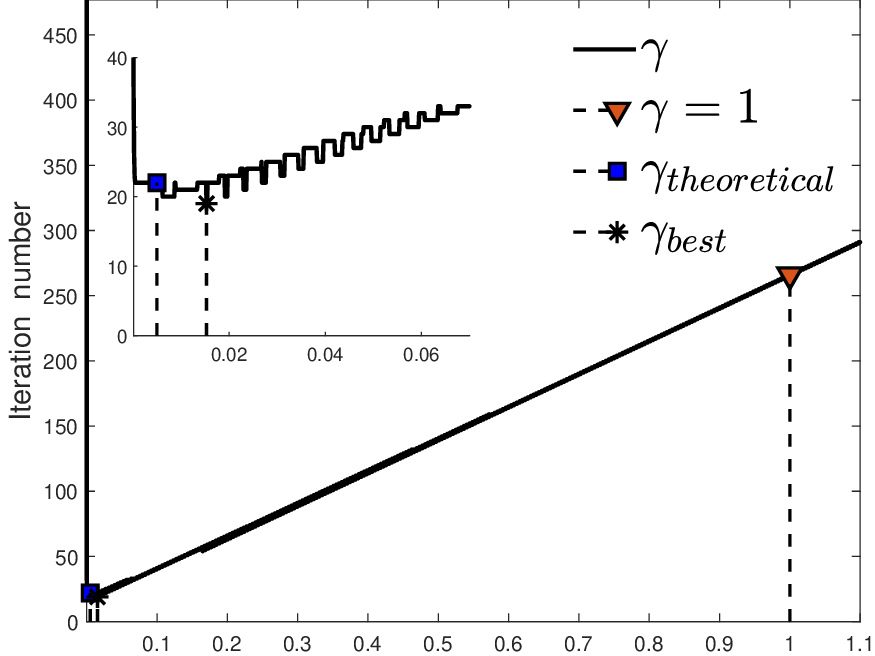} 	
				\caption{LAD}
			\end{subfigure}	
			\begin{subfigure}{0.3\textwidth}	
				\includegraphics[scale=0.25]{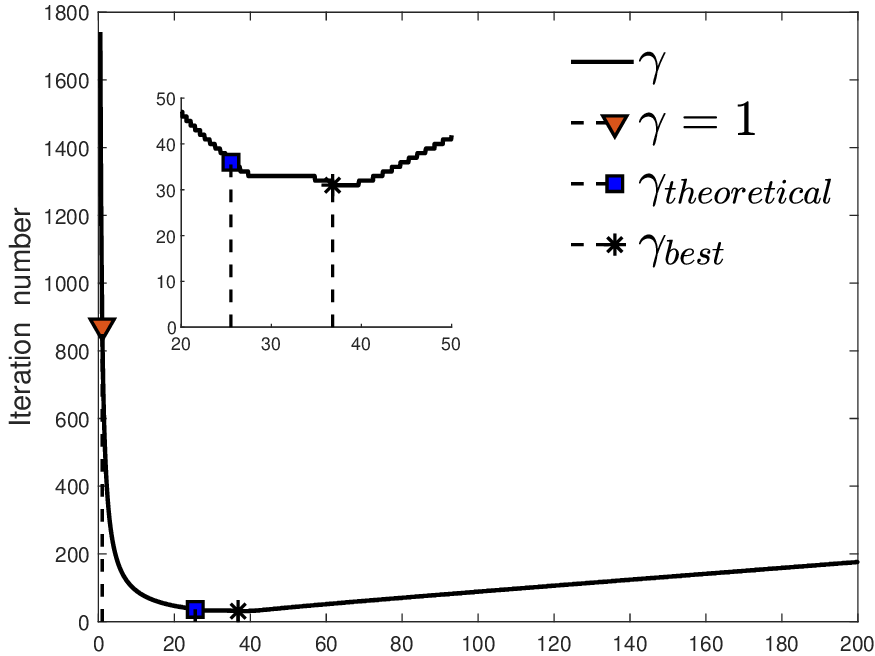} 	
				\caption{QP}
			\end{subfigure}
			\begin{subfigure}{0.3\textwidth}	
				\includegraphics[scale=0.25]{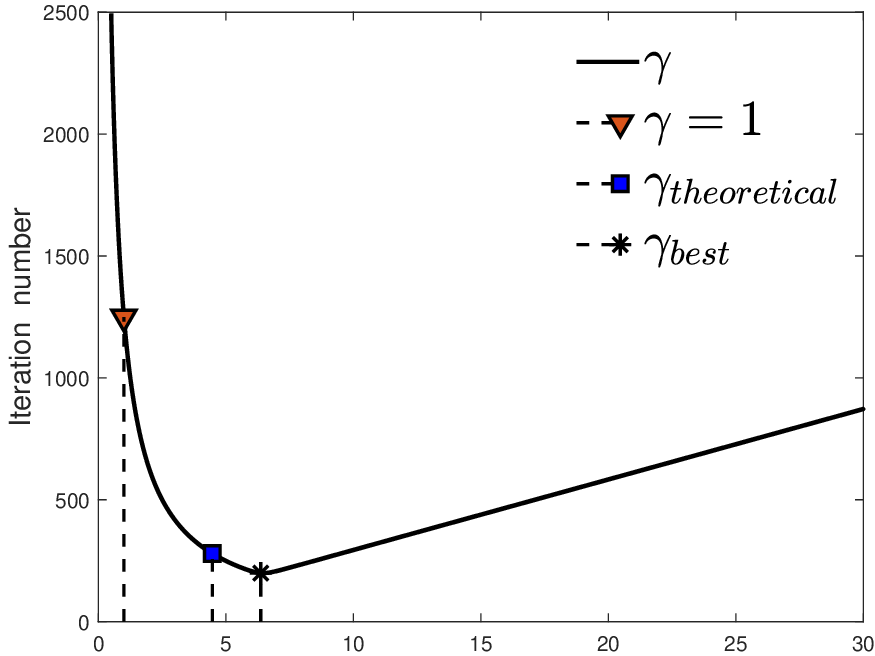} 	
				\caption{TV}
			\end{subfigure}
			\caption{Practical performance of different step-size choices of $\gamma$.}
		\end{center}
	\end{figure}
	\textbf{Data settings:} In all simulations, we use random Gaussian data $ \mathcal{N}(0, \sigma) $ and common  settings as in the literature. Particularly, we adapt the data settings from \cite{boyd12} for Lasso, LAD, QP, TV.  For applications QP, TV, all random data are from $ \mathcal{N}(0, 1) $.  For SR, due to the pre-fixed step-size is related to $\sigma$, we slightly change $ \sigma $ from 1 to 2 to verify the correctness of our formula. 
	For applications BQP, Lasso, LAD, the data appears well-conditioned if we use $ \mathcal{N}(0, 1) $. In this case,  $\gamma = 1$  appears sufficiently good and our results are less useful here.  We then promote an ill-conditioned structure by significantly changing the value of $\sigma$ of either $ \bm{A} $ or $ \bm{b} $, while keeping all other data the same as in \cite{boyd12}, which is mostly from $ \mathcal{N}(0, 1) $. Specifically, in BQP we generate the entries of $ \bm{A} $  from $ \mathcal{N}(0, 0.01) $; in Lasso, we promote an ill-conditioned structure of $ \bm{b} $ using $ \mathcal{N}(0, 50) $; in LAD, 5 randomly chosen entries of $\bm{b} \in \mathbb{R}^{500 \times 1} $ is increased from the original setting of $ \mathcal{N}(0, 100) $ in \cite{boyd12} to $ \mathcal{N}(0, 5000) $.
	
	%
	
	\textbf{Performance:} 
	For the pre-fixed step-sizes of BQP, SR as in  \eqref{prefix_bqp}, \eqref{prefix_sr}, we simply use the upper bound and the lower bound of $ d $, respectively. As can be seen from Figure \ref{1a}, \ref{1b}, the pre-fixed step-sizes performance of BQP and SR will be improved if we choose d slightly smaller than the upper bound and larger than the lower bound, respectively. Nevertheless, their performances are already outstanding and a more careful choice is out of our scope. One advantage of the pre-fixed step-size from direct estimation is that we have insights on how to further improve it, particularly if we have some additional knowledge of the data.

	The current iterates estimator $ ||\bm{\lambda}^k|| / ||\bm{x}^k||$  is not plotted in the above figure due to that it is an adaptive step-size. We find that its iteration number complexity performance is roughly the same as the optimal step-size (as the `blue square' above).

\bibliographystyle{plain}
\bibliography{Reference/ref1,Reference/Ref_S,Reference/ML_application,Reference/sr_applications}

	\appendix
	
	\section{Notation}
	Let $ \mathscr H $  denote a real Hilbert space  with an inner product $ \langle\cdot, \cdot\rangle $ equipped. The associated norm is given by $\Vert\cdot\Vert  $. 
	An operator $ \mathcal{T} $ on a Hilbert space is a point-to-set mapping $ \mathcal{T}:\mathscr H \rightarrow 2^\mathscr H $.
	We denote by  $ \Gamma_{0}(\mathscr{H}) $  the class of  closed, convex and proper (CCP) functions from $ \mathscr{H} $ to $ \mathbb{R} \cup \{\infty\} $, 
	by $ \circ $ the function composition,
	by $ \text{int}(\cdot) $ the  interior,
	by $ \text{sri}(\cdot) $ the strong relative interior,
	by $ \text{ran}(\cdot) $ the range,
	by $ \text{dom}(\cdot) $ the domain,
	by $ \mathfrak{B}(\mathscr{H})  \eqdef \{\mathcal{T}: \mathscr{H}\rightarrow\mathscr{H} \,\vert\, \mathcal{T}\,\, \text{is linear and continuous}\} $
	the space of bounded linear operators.
	The set operation $ C - D \eqdef \{x-y \,|\, x\in C, y\in D\} $.
	
	The uppercase bold, lowercase bold, and not bold letters are used for matrices, vectors, and scalars, respectively. The uppercase calligraphic letters, such as $ \mathcal{A} $, are used to denote  operators. Particularly, the identity operator is denoted by $ \mathcal{I} $. 
	
	\section{Preparatory materials}
	
		\subsection{Lipschitz continuous operators}
	
	\begin{defi}\label{def_Lipschitz}
		Let $ \mathcal{T}: \mathscr H \rightarrow \mathscr H$. Then,  $ \mathcal{T} $ is
		
		\vspace{4pt}
		\noindent(i) $L$-Lipschitz continuous if \,
		$ \Vert \mathcal{T}\bm{x} - \mathcal{T}\bm{y}  \Vert \leq L \Vert \bm{x} - \bm{y}  \Vert, \,\,\forall \bm{x},\bm{y} \in \mathscr H, \: L\geq 0 $;

		\vspace{4pt}
		\noindent(ii) a contraction if it is $L$-Lipschitz continuous with $L\in[0,1)$;   
		
		\vspace{4pt}
		\noindent(iii) nonexpansive if it is $1$-Lipschitz continuous, i.e., 
		$ \Vert \mathcal{T}\bm{x} - \mathcal{T}\bm{y}  \Vert \leq  \Vert \bm{x} - \bm{y}  \Vert, \forall \bm{x},\bm{y} \in \mathscr H $;
		
		\vspace{4pt}
		\noindent(iv) $1/L$-cocoercive if \,
		$ \frac{1}{L}	\Vert \mathcal{T}\bm{x} - \mathcal{T}\bm{y}  \Vert^2 \leq  \langle \mathcal{T}\bm{x} - \mathcal{T}\bm{y}, \bm{x} - \bm{y} \rangle, \,\,\forall \bm{x},\bm{y} \in \mathscr H,\: L> 0 $;
		
		\vspace{4pt}
		\noindent(v) firmly nonexpansive if it is $1$-cocoercive, i.e., 
		\begin{equation*}
			\Vert \mathcal{T}\bm{x} - \mathcal{T}\bm{y}  \Vert^2 \leq  \langle \mathcal{T}\bm{x} - \mathcal{T}\bm{y}, \bm{x} - \bm{y} \rangle, \quad\forall \bm{x},\bm{y} \in \mathscr H.
		\end{equation*}
	\end{defi}
	
	\begin{lem}[${1}/{2}$-averaging]\label{1/2 averaged operator}
		Let $ \mathcal{T} $ be nonexpansive. Then, the composition $ \frac{1}{2}\mathcal{I} + \frac{1}{2}\mathcal{T} $ is firmly nonexpansive. 
		Conversely, suppose $ 2\mathcal{A} - \mathcal{I} $ is nonexpansive. Then, $ \mathcal{A} $ is firmly nonexpansive.
	\end{lem}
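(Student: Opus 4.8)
The plan is to reduce both implications to a single quadratic identity in $\mathscr H$, after which each direction follows by a one-line sign argument; no convexity, no continuity, just the parallelogram-type algebra. Fix $\bm{x},\bm{y}\in\mathscr H$ and, for a generic operator $\mathcal{A}:\mathscr H\rightarrow\mathscr H$, abbreviate $\bm{a}\eqdef\mathcal{A}\bm{x}-\mathcal{A}\bm{y}$ and $\bm{d}\eqdef\bm{x}-\bm{y}$. The observation I would build on is that the reflection $2\mathcal{A}-\mathcal{I}$ satisfies $(2\mathcal{A}-\mathcal{I})\bm{x}-(2\mathcal{A}-\mathcal{I})\bm{y}=2\bm{a}-\bm{d}$, so expanding the squared norm gives
\begin{equation*}
\Vert (2\mathcal{A}-\mathcal{I})\bm{x}-(2\mathcal{A}-\mathcal{I})\bm{y}\Vert^2
= \Vert \bm{d}\Vert^2 - 4\big(\langle \bm{a},\bm{d}\rangle - \Vert \bm{a}\Vert^2\big),
\end{equation*}
which is just $\Vert 2\bm{a}-\bm{d}\Vert^2 = 4\Vert\bm{a}\Vert^2 - 4\langle\bm{a},\bm{d}\rangle + \Vert\bm{d}\Vert^2$ regrouped.

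For the first claim I would set $\mathcal{A}=\tfrac12\mathcal{I}+\tfrac12\mathcal{T}$, so that $2\mathcal{A}-\mathcal{I}=\mathcal{T}$. Then nonexpansiveness of $\mathcal{T}$ makes the left-hand side of the identity at most $\Vert\bm{d}\Vert^2$, forcing $\langle\bm{a},\bm{d}\rangle-\Vert\bm{a}\Vert^2\geq0$; since $\bm{x},\bm{y}$ were arbitrary this is exactly Definition~\ref{def_Lipschitz}(v), i.e. $\mathcal{A}=\tfrac12\mathcal{I}+\tfrac12\mathcal{T}$ is firmly nonexpansive. For the converse I would apply the same identity directly to $\mathcal{A}$: assuming $2\mathcal{A}-\mathcal{I}$ nonexpansive again bounds the left-hand side by $\Vert\bm{d}\Vert^2$, hence $\Vert\bm{a}\Vert^2\leq\langle\bm{a},\bm{d}\rangle$ for all $\bm{x},\bm{y}$, which is firm nonexpansiveness of $\mathcal{A}$.

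I do not expect any genuine obstacle; the only care needed is getting the constant $4$ and the sign right in the regrouping, and noticing that the computation is an equivalence, so the same line serves both halves — in effect one is proving that firm nonexpansiveness of $\mathcal{A}$ is equivalent to nonexpansiveness of $2\mathcal{A}-\mathcal{I}$, and the two stated implications are its two directions.
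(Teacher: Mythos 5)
Your proof is correct: the single identity $\Vert 2\bm{a}-\bm{d}\Vert^2=\Vert\bm{d}\Vert^2-4(\langle\bm{a},\bm{d}\rangle-\Vert\bm{a}\Vert^2)$ with $\bm{a}=\mathcal{A}\bm{x}-\mathcal{A}\bm{y}$, $\bm{d}=\bm{x}-\bm{y}$ immediately yields both implications, and your observation that the two stated directions are just the two sides of the equivalence ``$\mathcal{A}$ firmly nonexpansive $\iff$ $2\mathcal{A}-\mathcal{I}$ nonexpansive'' is exactly right. The paper states this lemma as standard background without giving a proof (it is the classical averaging result from the convex-analysis literature it cites), and your argument is precisely the standard quadratic-expansion proof of that fact, so there is nothing to flag.
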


	\subsection{Fixed-point characterization of ADMM}
	Here, we present the fixed-point characterization of ADMM. Proving the two types of fixed-point characterizations as in Section 3  (corresponding to the classical and new proximal operators) follows the same strategy. To avoid repeating, here we only prove the fixed-point characterization corresponding to our proposed new proximal operator.
	Let $  \bm{\psi}^{k+1} \eqdef \sqrt{\gamma}{\bm{x}}^{k+1} + {\bm{\lambda}}^k/\sqrt{\gamma} $. We will show that the sequence $ \{\bm{\psi}^{k+1}\} $ will converge to a fixed-point (if it exists).
	
	To start, we first rewrite the new ADMM iterates as in Section 3 using the definition $  \bm{\psi}^{k+1} \eqdef \sqrt{\gamma}{\bm{x}}^{k+1} + {\bm{\lambda}}^k/\sqrt{\gamma} $ into
		\begin{align}
		{\bm{x}}^{k+1} =&\,\,  \frac{1}{\sqrt{\gamma}}\textbf{Prox}_{f\frac{1}{\sqrt{\gamma}}}(2\sqrt{\gamma}{\bm{z}}^{k}- \bm{\psi}^{k}),\nonumber\\				
		{\bm{z}}^{k+1} =&\,\,  \frac{1}{\sqrt{\gamma}}\textbf{Prox}_{g\frac{1}{\sqrt{\gamma}}}(\bm{\psi}^{k}),\nonumber\\
		{\bm{\lambda}}^{k+1} =&\,\, \sqrt{\gamma} (\bm{\psi}^{k+1} - \sqrt{\gamma}{\bm{z}}^{k+1}).
		\end{align}
	Invoking the above iterates definitions, we arrive at 
	\begin{align}
		 \bm{\psi}^{k+1} 
		 =& \sqrt{\gamma}{\bm{x}}^{k+1} + {\bm{\lambda}}^k/\sqrt{\gamma}, \nonumber\\
		 =& \textbf{Prox}_{f\frac{1}{\sqrt{\gamma}}}(2\sqrt{\gamma}{\bm{z}}^{k}- \bm{\psi}^{k}) + \bm{\psi}^{k} - \sqrt{\gamma}{\bm{z}}^{k}, \nonumber\\
		 =& \textbf{Prox}_{f\frac{1}{\sqrt{\gamma}}}(2\textbf{Prox}_{g\frac{1}{\sqrt{\gamma}}}(\bm{\psi}^{k})- \bm{\psi}^{k}) + \bm{\psi}^{k} - \textbf{Prox}_{g\frac{1}{\sqrt{\gamma}}}(\bm{\psi}^{k}), \nonumber\\
		 =& \textbf{Prox}_{f\frac{1}{\sqrt{\gamma}}}\circ(2\textbf{Prox}_{g\frac{1}{\sqrt{\gamma}}} - \mathcal{I})(\bm{\psi}^{k}) - 
		  \frac{1}{2}(2\textbf{Prox}_{g\frac{1}{\sqrt{\gamma}}}   (\bm{\psi}^{k}) -\bm{\psi}^{k}) + \frac{1}{2}\bm{\psi}^{k}, \nonumber\\
		 =& \frac{1}{2}(2\textbf{Prox}_{f\frac{1}{\sqrt{\gamma}}} - \mathcal{I})\circ(2\textbf{Prox}_{g\frac{1}{\sqrt{\gamma}}} - \mathcal{I})(\bm{\psi}^{k}) + \frac{1}{2}\bm{\psi}^{k}, \nonumber\\
		 =&  \mathcal{F}\bm{\psi}^{k},
	\end{align}
	where $ \mathcal{F} \eqdef \frac{1}{2}(2\textbf{Prox}_{f\frac{1}{\sqrt{\gamma}}} - \mathcal{I})\circ(2\textbf{Prox}_{g\frac{1}{\sqrt{\gamma}}} - \mathcal{I}) + \frac{1}{2}\mathcal{I}$, and where $ \mathcal{I} $ denotes the identity operator.
	
	By Proposition 2.1, the new proximal operator is firmly nonexpansive. Then, by Lemma \ref{1/2 averaged operator}, the term $ 2\textbf{Prox}_{f\frac{1}{\sqrt{\gamma}}} - \mathcal{I} $ is nonexpansive. The composition of two such nonexpansive operators is also nonexpansive, i.e., $ (2\textbf{Prox}_{f\frac{1}{\sqrt{\gamma}}} - \mathcal{I})\circ(2\textbf{Prox}_{g\frac{1}{\sqrt{\gamma}}} - \mathcal{I}) $ is nonexpansive. Invoking Lemma \ref{1/2 averaged operator} again, due to $ \mathcal{F} $ admits a 1/2-averaged form, it is  firmly nonexpansive.
	Due to the  firm nonexpansiveness, we can conclude that the sequence $ \{\bm{\psi}^{k+1}\} $ will converge to a fixed-point (if it exists).

	\subsection{SDP properties}
	\begin{lem}
		Given an indicator function $ \delta_{\mathbb{S}_+}  $, its associated proximal operator is reduced to a projection operator
		\begin{equation}
			\textbf{Prox}_{\delta_{\mathbb{S}_+^N} }(\cdot) = \Pi_{\mathbb{S}^N_+}(\cdot). 
		\end{equation}
	\end{lem}
	\begin{lem}\label{lem_sdp2}
		Let $ (\bm{X}^\star, \bm{\Lambda}^\star) $ be the primal-dual solution pair for solving SDP as in (5.1) via ADMM. The following holds:
		\begin{equation}\label{property_2}
			\bm{X}^\star \succeq 0,\quad \bm{\Lambda}^\star \preceq 0,\quad  \langle \bm{X}^\star, \bm{\Lambda}^\star \rangle = 0.
		\end{equation}	
	\end{lem}
	\begin{proof}
		By the update rule of ADMM, we have
		\begin{equation}
			\bm{Z}^\star =\,\, \textbf{Prox}_{g}(\bm{X}^\star + \bm{\Lambda}^\star) = \Pi_{\mathbb{S}^N_+}(\bm{X}^\star + \bm{\Lambda}^\star) \succeq 0.
		\end{equation}
		Since $ \bm{X}^\star = \bm{Z}^\star $, we have $ \bm{X}^\star \succeq 0 $.

		For the dual solution $ \bm{\Lambda}^\star  $, we have
		\begin{align}
			\bm{\Lambda}^\star 
			&= \bm{X}^\star + \bm{\Lambda}^\star - \bm{X}^\star \nonumber\\
			&= \Pi_{\mathbb{S}^N_+}(\bm{X}^\star + \bm{\Lambda}^\star) + \Pi_{\mathbb{S}^N_-}(\bm{X}^\star + \bm{\Lambda}^\star) - \bm{X}^\star \nonumber\\
			&= \Pi_{\mathbb{S}^N_-}(\bm{X}^\star + \bm{\Lambda}^\star) \preceq 0
		\end{align}
		
		At last,
		\begin{equation}
			\langle \bm{X}^\star, \bm{\Lambda}^\star \rangle = \langle \Pi_{\mathbb{S}^N_+}(\bm{X}^\star + \bm{\Lambda}^\star), \Pi_{\mathbb{S}^N_-}(\bm{X}^\star + \bm{\Lambda}^\star) \rangle = 0.
		\end{equation}
		The proof is now concluded.
	\end{proof}

	\section{Proofs of main results}
	\subsection{proof of Proposition 2.1, 2.2 }
	Here, we directly prove Proposition 2.2, which is a generalised version of Proposition 2.1. We will appeal to the maximal monotone operator theory.
	
	To start, we employ the following  lemmas:
	
	\begin{lem}\cite[Proposition 6.19]{bauschke2017convex}\label{lem1}
		Let $ C, D $ be convex subsets of $ \mathscr H $, let $  \mathcal{L} \in \mathfrak{B}(\mathscr{H}) $. Suppose $ D \bigcap \text{int}\,\, \mathcal{L}(C) \neq \emptyset$ or $ \mathcal{L}(C) \bigcap \text{int}\,\, D \neq \emptyset$. Then, $ 0\in \text{sri}\, (D - \mathcal{L}(C) ) $.
	\end{lem}
	
	\vspace{1pt}
	
	\begin{lem}\cite[Corollary 16.53]{bauschke2017convex}\label{dom_ran}
		Let $ f \in \Gamma_{0}(\mathscr{H}) $ and $  \mathcal{L} \in \mathfrak{B}(\mathscr{H}) $. Suppose $ 0\in \text{sri}\, (\text{dom}(f) -\text{ran}(\mathcal{L}) ) $. Then, 
		\begin{equation*}
			\partial (f\circ\mathcal{L}) = \mathcal{L}^*\circ\partial f\circ\mathcal{L}.
		\end{equation*}
	\end{lem}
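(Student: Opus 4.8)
The plan is to prove the two inclusions of the identity $\partial(f\circ\mathcal{L}) = \mathcal{L}^*\circ\partial f\circ\mathcal{L}$ separately, and to observe that only the inclusion ``$\subseteq$'' actually consumes the qualification $0\in\text{sri}(\text{dom}(f)-\text{ran}(\mathcal{L}))$.

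First I would dispatch the easy inclusion $\mathcal{L}^*\circ\partial f\circ\mathcal{L}\subseteq\partial(f\circ\mathcal{L})$, which needs no hypothesis on the relative interior. Fix $\bm{x}\in\mathscr{H}$ and $\bm{u}\in\partial f(\mathcal{L}\bm{x})$. The subgradient inequality $f(\bm{y})\geq f(\mathcal{L}\bm{x})+\langle\bm{u},\bm{y}-\mathcal{L}\bm{x}\rangle$ for all $\bm{y}\in\mathscr{H}$, specialized to $\bm{y}=\mathcal{L}\bm{z}$, reads $f(\mathcal{L}\bm{z})\geq f(\mathcal{L}\bm{x})+\langle\mathcal{L}^*\bm{u},\bm{z}-\bm{x}\rangle$, i.e. $\mathcal{L}^*\bm{u}\in\partial(f\circ\mathcal{L})(\bm{x})$. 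This is a two-line argument.

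The substance is the reverse inclusion $\partial(f\circ\mathcal{L})(\bm{x})\subseteq\mathcal{L}^*\partial f(\mathcal{L}\bm{x})$, for which I would use Fenchel--Rockafellar duality. Let $\bm{v}\in\partial(f\circ\mathcal{L})(\bm{x})$; then $\bm{x}\in\text{dom}(f\circ\mathcal{L})$, so $f(\mathcal{L}\bm{x})$ is a real number, and $\bm{x}$ globally minimizes $g(\bm{z}):=f(\mathcal{L}\bm{z})-\langle\bm{v},\bm{z}\rangle$ with finite value $\mu:=f(\mathcal{L}\bm{x})-\langle\bm{v},\bm{x}\rangle$. I would read this as the primal problem $\inf_{\bm{z}}\,[\,\ell(\bm{z})+f(\mathcal{L}\bm{z})\,]$ with $\ell(\bm{z}):=-\langle\bm{v},\bm{z}\rangle\in\Gamma_0(\mathscr{H})$, whose Fenchel dual is $-\inf_{\bm{w}}\,[\,f^*(\bm{w})+\ell^*(-\mathcal{L}^*\bm{w})\,]$. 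Since $\ell^*$ is the indicator of the singleton $\{-\bm{v}\}$, the dual equals $-\inf\{\,f^*(\bm{w})\,:\,\mathcal{L}^*\bm{w}=\bm{v}\,\}$. Because $\text{dom}(\ell)=\mathscr{H}$, the qualification required for strong duality with dual attainment is exactly $0\in\text{sri}(\text{dom}(f)-\mathcal{L}(\text{dom}(\ell)))=\text{sri}(\text{dom}(f)-\text{ran}(\mathcal{L}))$, our hypothesis. Hence the dual infimum is attained, say at $\bm{u}^\star$ with $\mathcal{L}^*\bm{u}^\star=\bm{v}$ and $-f^*(\bm{u}^\star)=\mu=f(\mathcal{L}\bm{x})-\langle\bm{v},\bm{x}\rangle$. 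Using $\langle\bm{v},\bm{x}\rangle=\langle\mathcal{L}^*\bm{u}^\star,\bm{x}\rangle=\langle\bm{u}^\star,\mathcal{L}\bm{x}\rangle$ and rearranging yields $f(\mathcal{L}\bm{x})+f^*(\bm{u}^\star)=\langle\bm{u}^\star,\mathcal{L}\bm{x}\rangle$, which is the Fenchel--Young equality, so $\bm{u}^\star\in\partial f(\mathcal{L}\bm{x})$ and therefore $\bm{v}=\mathcal{L}^*\bm{u}^\star\in\mathcal{L}^*\partial f(\mathcal{L}\bm{x})$.

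The main obstacle I expect is invoking the correct Hilbert-space form of Fenchel--Rockafellar duality under the strong-relative-interior constraint qualification, and in particular using the version that guarantees not merely a zero duality gap but genuine \emph{attainment} of the dual optimizer, since it is this attained $\bm{u}^\star$ that delivers the desired subgradient; a secondary, routine point is the bookkeeping that properness of $f$ together with $\bm{x}\in\text{dom}(f\circ\mathcal{L})$ forces $\mu\in\mathbb{R}$ so that the strong-duality equation is meaningful. Everything else is elementary.
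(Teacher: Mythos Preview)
Your proof is correct and follows the standard route for the subdifferential chain rule: the easy inclusion by substitution in the subgradient inequality, and the hard inclusion by casting the subgradient condition as a primal minimum and invoking Fenchel--Rockafellar strong duality with dual attainment under the sri qualification, then reading off the desired subgradient from the Fenchel--Young equality.

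There is, however, nothing to compare against: the paper does \emph{not} prove this lemma. It is quoted verbatim from \cite[Coro.~16.53]{bauschke2017convex} and used as a black box in the proof of Proposition~\ref{lem_max_sub}. So your proposal supplies a self-contained argument where the paper simply cites the literature. As a minor remark, the proof you sketch is essentially the one that appears in Bauschke--Combettes (via their Fenchel--Rockafellar theorem), so even relative to the cited source your approach is the expected one rather than a genuinely different route.
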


	\begin{lem}\cite[Proposition 23.25]{bauschke2017convex}\label{coro_maxm}
		Let $\mathcal{A}: \mathscr{H} \rightarrow2^\mathscr{H}$ be maximal monotone. Suppose  $ \mathcal{L} \in \mathfrak{B}(\mathscr{H}) $ is such that $ \mathcal{L}\mathcal{L}^* $ is invertible. Let $ \mathcal{B} \eqdef \mathcal{L}^*\mathcal{A}\mathcal{L} $. Then, 	
		$\mathcal{B}: \mathscr{H}\rightarrow2^\mathscr{H}$ is maximal monotone.	
	\end{lem}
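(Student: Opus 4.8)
The plan is to establish the two parts together: I would first derive the resolvent formula in (ii) by a change of variables, and then read off maximal monotonicity of $\mathcal{B}$ in (i) from Minty's theorem. Monotonicity of $\mathcal{B}$ itself is immediate and I would dispatch it at the outset: given $\bm{v}_i\in\mathcal{B}\bm{u}_i=\mathcal{L}^*\mathcal{A}\mathcal{L}\bm{u}_i$ for $i=1,2$, write $\bm{v}_i=\mathcal{L}^*\bm{w}_i$ with $\bm{w}_i\in\mathcal{A}(\mathcal{L}\bm{u}_i)$, so that $\langle\bm{u}_1-\bm{u}_2,\bm{v}_1-\bm{v}_2\rangle=\langle\mathcal{L}\bm{u}_1-\mathcal{L}\bm{u}_2,\bm{w}_1-\bm{w}_2\rangle\ge0$ by monotonicity of $\mathcal{A}$.

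The core of the argument concerns the auxiliary operator $\mathcal{M}\eqdef\mathcal{L}\mathcal{L}^*+\mathcal{A}^{-1}$. Since $\mathcal{A}$ is maximal monotone, so is $\mathcal{A}^{-1}$, and adding the bounded, self-adjoint, everywhere-defined monotone operator $\mathcal{L}\mathcal{L}^*$ keeps the sum maximal monotone (the sum rule in the easy case where one summand is continuous with full domain). Because $\mathcal{L}\mathcal{L}^*$ is invertible in $\mathfrak{B}(\mathscr{H})$, it is bounded below, hence being positive self-adjoint it obeys $\langle\mathcal{L}\mathcal{L}^*\bm{x},\bm{x}\rangle\ge c\Vert\bm{x}\Vert^2$ with $c\eqdef\Vert(\mathcal{L}\mathcal{L}^*)^{-1}\Vert^{-1}>0$; thus $\mathcal{M}$ is $c$-strongly monotone. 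A maximal monotone, $c$-strongly monotone operator is surjective with a single-valued $(1/c)$-Lipschitz inverse — writing $\mathcal{M}=c\,\mathcal{I}+(\mathcal{M}-c\,\mathcal{I})$ with $\mathcal{M}-c\,\mathcal{I}$ maximal monotone and invoking Minty's theorem — so $\mathcal{M}^{-1}$ is a well-defined bounded map on all of $\mathscr{H}$.

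Next I would solve the inclusion $\bm{z}\in\bm{u}+\mathcal{B}\bm{u}$ for an arbitrary $\bm{z}\in\mathscr{H}$. It is equivalent to the existence of $\bm{w}$ with $\bm{w}\in\mathcal{A}(\mathcal{L}\bm{u})$ and $\bm{u}=\bm{z}-\mathcal{L}^*\bm{w}$; applying $\mathcal{L}$ and using $\mathcal{L}\bm{u}\in\mathcal{A}^{-1}\bm{w}$ rewrites it as $\mathcal{L}\bm{z}\in\mathcal{L}\mathcal{L}^*\bm{w}+\mathcal{A}^{-1}\bm{w}=\mathcal{M}\bm{w}$, i.e. $\bm{w}=\mathcal{M}^{-1}\mathcal{L}\bm{z}$, which is uniquely determined, whence $\bm{u}=\bm{z}-\mathcal{L}^*\mathcal{M}^{-1}\mathcal{L}\bm{z}$. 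Running the implications backwards shows that this $\bm{u}$ does satisfy $\bm{z}\in\bm{u}+\mathcal{B}\bm{u}$. This simultaneously yields $\mathcal{J}_\mathcal{B}=(\mathcal{I}+\mathcal{B})^{-1}=\mathcal{I}-\mathcal{L}^*\circ(\mathcal{L}\mathcal{L}^*+\mathcal{A}^{-1})^{-1}\circ\mathcal{L}$, which is (ii), and shows that $\text{ran}(\mathcal{I}+\mathcal{B})=\mathscr{H}$ with $\mathcal{J}_\mathcal{B}$ single-valued; together with the monotonicity established above, Minty's theorem then gives (i).

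I expect the main obstacle to be the infinite-dimensional bookkeeping in the auxiliary step — namely upgrading ``invertible in $\mathfrak{B}(\mathscr{H})$'' to a uniform lower bound so that $\mathcal{M}$ is genuinely strongly monotone (not merely strictly monotone), and the fact that a strongly monotone maximal monotone operator is actually surjective with single-valued inverse. These two facts are precisely where the hypothesis that $\mathcal{L}\mathcal{L}^*$ is invertible enters; the remaining change of variables is routine.
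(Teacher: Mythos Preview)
The paper does not supply its own proof of this lemma: it is quoted verbatim as \cite[Prop.~23.25]{bauschke2017convex} and used as a black box in the proofs of Propositions~\ref{lem_max_sub} and~\ref{prop_equi_form}. So there is nothing in the paper to compare your argument against beyond the citation itself.

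That said, your self-contained proof is correct and follows the standard route one finds in the cited reference. The reduction of $\bm{z}\in(\mathcal{I}+\mathcal{B})\bm{u}$ to $\mathcal{L}\bm{z}\in\mathcal{M}\bm{w}$ with $\mathcal{M}=\mathcal{L}\mathcal{L}^*+\mathcal{A}^{-1}$, together with the observation that invertibility of $\mathcal{L}\mathcal{L}^*$ in $\mathfrak{B}(\mathscr{H})$ upgrades to a spectral lower bound (hence $c$-strong monotonicity of $\mathcal{M}$), is exactly the mechanism behind the book's proof. Your concern about the ``infinite-dimensional bookkeeping'' is well placed but you have handled it correctly: the key point is that a maximal monotone, $c$-strongly monotone operator is surjective, which you obtain cleanly by noting that $(1/c)\mathcal{M}-\mathcal{I}$ remains maximal monotone and applying Minty. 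The only cosmetic remark is that in the forward implication the auxiliary $\bm{w}$ is a priori not unique from $\bm{w}\in\mathcal{A}(\mathcal{L}\bm{u})$ alone, but your derivation forces $\bm{w}=\mathcal{M}^{-1}\mathcal{L}\bm{z}$ anyway, so uniqueness of $\bm{u}$ follows.
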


	By definition, we directly have the following resolvent characterization:
	\begin{equation}\label{resolvent1}
		\textbf{Prox}_{f\mathcal{S}} = (\mathcal{I} + \partial (f\circ\mathcal{S}))^{-1}.
	\end{equation}
	The key is to prove $ \partial (f\circ\mathcal{S}) $ being maximal monotone. 
	
	We first apply Lemma \ref{lem1}. By our definition $ f $ is CCP and  $ \mathcal{S} \in \mathfrak{B}(\mathscr{H})$, we can substitute the set $ D $ there with  $ \text{dom}(f) $ and substitute $ \mathcal{L}(C) $ with $ \text{ran}(\mathcal{S})  $. 
	Clearly, $ \text{ran}(\mathcal{S}) \bigcap \text{int}\,\, \text{dom}(f) \neq \emptyset$.
	Therefore, we have  $ 0\in \text{sri}\, (\text{dom}(f) -\text{ran}(\mathcal{L}) ) $. 
	
	Then, by  Lemma \ref{dom_ran},  we can rewrite \eqref{resolvent1} into
	\begin{equation}
		\textbf{Prox}_{f\mathcal{S}}  =  (\mathcal{I} + \mathcal{S}^*\circ\partial f\circ\mathcal{S})^{-1}.
	\end{equation}
	Since  $\mathcal{S}$ is bijective, we have  that $ \mathcal{S}\mathcal{S}^* $ is invertible. In view of Lemma \ref{coro_maxm}, the composite operator $ \mathcal{S}^*\circ\partial f\circ\mathcal{S} $ is therefore maximal monotone. Hence, the operator $ (\mathcal{I} + \mathcal{S}^*\circ\partial f\circ\mathcal{S})^{-1} $ is firmly nonexpansive and single-valued. The proof is now concluded.

	\subsection{proof of Definition 2.2, 2.4}
	We directly prove the generalized version as in Definition 2.4.
	
	Given $ \textbf{Prox}_{f\mathcal{S}} $ with $ \mathcal{S} $ being  bijective, linear and continuous, we aim to show
	\begin{equation}
		\underset{\bm{z}}{\text{argmin}} \,\, f(\mathcal{S}\bm{z})+ \frac{1}{2}\Vert \bm{z} - \cdot\Vert^2
		=  \mathcal{S}^{-1} \underset{\bm{z}}{\text{argmin}} \,\, f(\bm{z})+ \frac{1}{2}\Vert \mathcal{S}^{-1}\bm{z} - \cdot\Vert^2.
	\end{equation}
	Let $ \tilde{z} \eqdef \mathcal{S}\bm{z}$, and let $ {z}^\star $ be the minimizer to $ f(\mathcal{S}\bm{z})+ \frac{1}{2}\Vert \bm{z} - \cdot\Vert^2 $. Then, 
	$ \tilde{z}^\star = \mathcal{S}\bm{z}^\star $ would be the minimizer to $ f(\tilde{z})+ \frac{1}{2}\Vert \mathcal{S}^{-1}\tilde{z} - \cdot\Vert^2 $. 
	Hence, $ \bm{z}^\star = \mathcal{S}^{-1} \underset{\tilde{\bm{z}}}{\text{argmin}} \,\,f(\tilde{z})+ \frac{1}{2}\Vert \mathcal{S}^{-1}\tilde{z} - \cdot\Vert^2 $, which concludes the proof.

	\subsection{proof of Lemma 2.1}
	To show the translation rule  in Lemma 2.1, we first recall the following definitions:
	\begin{align}
		\textbf{Prox}_{\gamma f}(\cdot) =& \,\underset{\bm{z}}{\text{argmin}} \,\,   f(\bm{z}) + \frac{1}{2\gamma}\Vert \bm{z} - \cdot\Vert^2,\nonumber\\		
		\textbf{Prox}_{f \rho} (\cdot) =& \frac{1}{\rho}\,\underset{\bm{z}}{\text{argmin}} \,\, f(\bm{z})+ \frac{1}{2}\Vert \frac{\bm{z} }{\rho}- \cdot\Vert^2.
	\end{align}
	
	For the first relation in Lemma 2.1, we have 
	\begin{align}
		\frac{1}{\rho}\textbf{Prox}_{\rho^2f} ( \rho{\bm{v}} ) 
		=& \frac{1}{\rho}\underset{\bm{z}}{\text{argmin}} \,   f(\bm{z}) + \frac{1}{2\rho^2}\Vert \bm{z} - \rho\bm{v}\Vert^2 \nonumber\\
		=&  \frac{1}{\rho}\underset{\bm{z}}{\text{argmin}} \,   f(\bm{z}) + \frac{1}{2}\Vert \frac{\bm{z}}{\rho} - \bm{v}\Vert^2 \nonumber\\
		=& \textbf{Prox}_{f \rho}(\bm{v}).
	\end{align}
	
	For the second relation  in Lemma 2.1,  we have 
	\begin{align}
		\sqrt{\gamma}\textbf{Prox}_{f\sqrt{\gamma}} ( \frac{\bm{v}}{\sqrt{\gamma}}) 
		=& \sqrt{\gamma} \,\frac{1}{\sqrt{\gamma}}\underset{\bm{z}}{\text{argmin}}  f(\bm{z}) + \frac{1}{2}\Vert \frac{\bm{z}}{\sqrt{\gamma}} - \frac{\bm{v}}{\sqrt{\gamma}}\Vert^2\nonumber\\
		=& \underset{\bm{z}}{\text{argmin}}   f(\bm{z}) + \frac{1}{2\gamma}\Vert \bm{z} - \bm{v}\Vert^2\nonumber\\
		=&\textbf{Prox}_{\gamma f}(\bm{v}).
	\end{align}
	The proof is now concluded.

	\subsection{proof of Lemma 2.2}
		To show the translation rule  in Lemma 2.2, we first recall the following definitions:
	\begin{align}
		\textbf{Prox}_{\mathcal{M} f}(\cdot) =& \,\underset{\bm{z}}{\text{argmin}} \,\,   f(\bm{z}) + \frac{1}{2}\Vert \bm{z} - \cdot\Vert^2_{\mathcal{M}^{-1}},\nonumber\\
		\textbf{Prox}_{f \mathcal{S}} (\cdot) =&\mathcal{S}^{-1}\underset{\bm{z}}{\text{argmin}} \,\, f(\bm{z})+ \frac{1}{2}\Vert \mathcal{S}^{-1}\bm{z}- \cdot\Vert^2.
	\end{align}
	
	The key here is that for any positive definite operator $ \mathcal{M} \succ 0$, it admits a decomposition $ \mathcal{M} = \mathcal{S}\mathcal{S}^* $, where $ \mathcal{S}  $ is a  bijective, linear and continuous operator and where $ \mathcal{S}^* $ denotes the adjoint operator.
	
	For the first relation in Lemma 2.2, we have 
	\begin{align}
		\mathcal{S}^{-1} \textbf{Prox}_{\mathcal{M} f}(\mathcal{S}\bm{v}) 
		=& \mathcal{S}^{-1}\underset{\bm{z}}{\text{argmin}} \,\,  f(\bm{z}) + \frac{1}{2}\Vert \bm{z} -\mathcal{S}\bm{v}\Vert^2_{\mathcal{M}^{-1}}\nonumber\\
		=& \mathcal{S}^{-1}\underset{\bm{z}}{\text{argmin}} \,\,  f(\bm{z}) + \frac{1}{2}  \langle\bm{z} -\mathcal{S}\bm{v},  (\mathcal{S}\mathcal{S}^{*})^{-1} (\bm{z} -\mathcal{S}\bm{v}) \rangle\nonumber\\
		=& \mathcal{S}^{-1}\underset{\bm{z}}{\text{argmin}} \,\,  f(\bm{z}) + \frac{1}{2}\Vert \mathcal{S}^{-1}\bm{z} - \mathcal{S}^{-1}\mathcal{S}\bm{v}\Vert^2\nonumber\\
		=&  \textbf{Prox}_{f\mathcal{S} }(\bm{v}).
	\end{align}
	
	For the second relation in Lemma 2.2, we have 
	\begin{align}
		  \mathcal{S}\textbf{Prox}_{f\mathcal{S}}( \mathcal{S}^{-1}\bm{v} ) 
		=& \mathcal{S}\mathcal{S}^{-1}\,\underset{\bm{z}}{\text{argmin}} \,\,f(\bm{z}) + \frac{1}{2}\Vert \mathcal{S}^{-1}\bm{z} - \mathcal{S}^{-1}\bm{v}\Vert^2\nonumber\\
		=& \underset{\bm{z}}{\text{argmin}} \,\,f(\bm{z}) + \frac{1}{2}  \langle\bm{z} -\mathcal{S}\bm{v},  (\mathcal{S}\mathcal{S}^{*})^{-1} (\bm{z} -\bm{v}) \rangle\nonumber\\
		=& \underset{\bm{z}}{\text{argmin}} \,\, f(\bm{z}) + \frac{1}{2}\Vert \bm{z} -\bm{v}\Vert^2_{(\mathcal{S}\mathcal{S}^{*})^{-1}}\nonumber\\
		= & \textbf{Prox}_{\mathcal{M} f}(\bm{v}).
	\end{align}
	The proof is now concluded.

	\subsection{proof of Proposition 2.3}
	Our proof is  inspired by \cite{cai2010singular} where the authors exploit the sub-gradient structure of the nuclear norm. 
	We therefore  study  the sub-gradient structure of the $ l_1 $ norm. 
	Recall that we aim to prove 
	\begin{equation}
		\bm{P}^\text{T}\mathcal{T} ((\bm{P}^{-1})^\text{T}\bm{v}) = \textbf{Prox}_{f\bm{P}}(\bm{v}),
	\end{equation}
	where $\bm{P}$ is a full-rank square matrix with orthogonal columns.

	To start, we first rewrite the above goal into proving
	\begin{equation}\label{l1_diag}
		(\bm{D}^\text{T}\bm{D})^{-1}\mathcal{T} (\bm{D}^\text{T}\bm{v}) 
		=  \underset{\bm{x}}{\text{argmin}} \,\, \Vert \bm{x} \Vert_1 + \frac{1}{2}\Vert\bm{D}\bm{x} - \bm{v} \Vert^2  ,
	\end{equation}
	where $ \bm{D} \eqdef \bm{P}^{-1} $, and the alternative definition $ \textbf{Prox}_{f \bm{P}} (\cdot) =\bm{P}^{-1}\underset{\bm{z}}{\text{argmin}} \,\, f(\bm{z})+ \frac{1}{2}\Vert \bm{P}^{-1}\bm{z}- \cdot\Vert^2 $ is invoked (recall from Definition 2.4).
	
	Let $ \bm{x}^\star $ be the minimizer to the above problem. Then,
	\begin{equation}
		\bm{D}^\text{T}\bm{v} - \bm{D}^\text{T}\bm{D}\bm{x}^\star \in \partial\Vert\bm{x}^\star \Vert_1.
	\end{equation}
	Suppose $ (\bm{D}^\text{T}\bm{D})^{-1}\mathcal{T} (\bm{D}^\text{T}\bm{v}) $  as in \eqref{l1_diag} is  correct, then we can substitute $ \bm{x}^\star $ from above with it. Our goal therefore becomes  proving the following sub-gradient relation:
	\begin{equation}\label{1_norm}
		\bm{D}^\text{T}\bm{v} - \mathcal{T} (\bm{D}^\text{T}\bm{v}) \in \partial \Vert (\bm{D}^\text{T}\bm{D})^{-1} \mathcal{T} (\bm{D}^\text{T}\bm{v})\Vert_1.
	\end{equation}

	Below, we will prove \eqref{1_norm}.
	
	 First, define the following decomposition:
	\begin{equation}
		\bm{D}^\text{T}\bm{v} \eqdef  \bm{y}_0 + \bm{y}_1,
	\end{equation}	
	where $ \bm{y}_0  $ (resp., $ \bm{y}_1 $) has the absolute value of all its non-zero entries  larger than 1 (resp.,  less or equal to 1). Such a decomposition implies the relation $ \bm{y}_0 \odot \bm{y}_1 = \bm{0}$, where $ \odot $ denotes  Hadamard product, i.e., element-wise multiplication.	
	
	Applying the soft thresholding operator $ \mathcal{T} $  to above yields
	\begin{equation}
		\mathcal{T} (\bm{D}^\text{T}\bm{v}) =  \bm{y}_0  - \text{sgn}(\bm{y}_0).
	\end{equation}
	Then, the left-hand side of \eqref{1_norm} can be written as
	\begin{equation}\label{rec_1}
		\bm{D}^\text{T}\bm{v} - \mathcal{T} (\bm{D}^\text{T}\bm{v}) = \text{sgn}(\bm{y}_0) + \bm{y}_1.
	\end{equation}
	
	Now consider the right-hand side of \eqref{1_norm}. 
	Similar to the nuclear norm characterization as in \cite{cai2008singular}, the subdifferential characterization of the $ l_1 $ norm can be given by
	\begin{equation}
		\partial \Vert \bm{x} \Vert_1 = \{ \text{sgn}(\bm{x}) + \bm{w} \,\vert\,	\bm{x}\odot \bm{w} = 0, \,\Vert \bm{w} \Vert_\infty \leq 1 \}.
	\end{equation}		
	Since by definition $ \Vert\bm{y}_1\Vert_\infty \leq 1$ and $ \bm{y}_0 \odot \bm{y}_1 = \bm{0}$, we have
	\begin{equation}
		\text{sgn}(\bm{y}_0) + \bm{y}_1  \in \partial \Vert \bm{y}_0 \Vert_1.
	\end{equation}
	Substituting \eqref{rec_1} to the above yields
	\begin{equation}
		\bm{D}^\text{T}\bm{v} - \mathcal{T} (\bm{D}^\text{T}\bm{v}) \in \partial \Vert \bm{y}_0 \Vert_1.
	\end{equation}
	Comparing the above to the final goal \eqref{1_norm}, all what left is to show 
	\begin{equation}
		\partial \Vert \bm{y}_0 \Vert_1 = \partial \Vert (\bm{D}^\text{T}\bm{D})^{-1} \mathcal{T} (\bm{D}^\text{T}\bm{v})\Vert_1.
	\end{equation}
	This is equivalent to showing 
	\begin{equation}\label{to_s1}
		\text{sgn}(\bm{y}_0) = \text{sgn}((\bm{D}^\text{T}\bm{D})^{-1} \mathcal{T} (\bm{D}^\text{T}\bm{v})).
	\end{equation}
	Since by assumption $ \bm{D} $ has orthogonal columns, we have $ \bm{D}^\text{T}\bm{D} $ being a diagonal matrix and so its inverse.
	Hence, we can define  
	\begin{equation}
		(\bm{D}^\text{T}\bm{D})^{-1} \eqdef \text{Diag}(\bm{z}),
	\end{equation}
	where $ \bm{z}  $ is a positive vector.
	The right-hand side  of \eqref{to_s1} now can be rewritten into
	\begin{align}
		\text{sgn}(\,\text{Diag}(\bm{z}) (\bm{y}_0  - \text{sgn}(\bm{y}_0))\,) 
		&= \text{sgn}(\,\bm{z}\odot \text{sgn}(\bm{y}_0) \odot (|\bm{y}_0| - \bm{1})\,) \\
		&= \text{sgn}(\bm{y}_0) \odot \text{sgn} (\bm{z}\odot (|\bm{y}_0| - \bm{1})). \nonumber
	\end{align}
	Since by definition $ (|\bm{y}_0| - \bm{1}) $  and $ \bm{z} $ are positive vectors, their Hadamard product  therefore yields  a positive vector.
	That said,
	\begin{equation}
		\text{sgn}(\bm{y}_0) \odot \text{sgn} (\bm{z}\odot (|\bm{y}_0| - \bm{1})) = \text{sgn}(\bm{y}_0) \odot \bm{1} = \text{sgn}(\bm{y}_0), 
	\end{equation}	 
	which proves \eqref{to_s1}. The proof is therefore concluded.

	\subsection{proof of extended Moreau decompositions }	
Here, we first prove eq. (2.10), which is the extended Moreau decomposition in terms of the new proximal  operator. Then, we prove the classical one as in eq. (2.9).
We will need the following lemma:
	
	\begin{lem}\cite[Proposition 13.23]{bauschke2017convex}\label{lem_moreau}
		Let $ f : \mathscr H \rightarrow  \, ]-\infty, +\infty]$. Then the following holds:
		
		(i) (for $\alpha \in \mathbb{R}_{++}$ ) $ (\alpha f)^* = \alpha f^* (\cdot/\alpha) $. 
		
		\vspace{1pt}
		(ii) 	Let $ \mathcal{L}\in \mathfrak{B}(\mathscr{H}) $ be bijective. Then $ (f\circ\mathcal{L})^* = f^*\circ {\mathcal{L}^*}^{-1}. $
	\end{lem}
	
	$\bullet$ For generality, we directly prove the operator parameter case, and the reduction to the scalar case as in eq. (2.10)  is straightforward.
	
	First, recall the basic Moreau decomposition 
	\begin{equation}
		\bm{v} =\,\textbf{Prox}_{f} (\bm{v}) +  \textbf{Prox}_{f^*} (\bm{v})
	\end{equation}
	
	Given a bijective parameter $\mathcal{S} \in \mathfrak{B}(\mathscr{H})$,
	substitute $ f $ above with $ f\circ\mathcal{S} $, we arrive at		
	\begin{equation}
		\bm{v} =\,\textbf{Prox}_{f\circ\mathcal{S}} (\bm{v}) +  \textbf{Prox}_{(f\circ\mathcal{S})^*} (\bm{v}) 
		=\,\textbf{Prox}_{f\circ\mathcal{S}} (\bm{v}) +  \textbf{Prox}_{f^*\circ{\mathcal{S}^*}^{-1}} (\bm{v}), 
	\end{equation}
	where the second equality is by invoking Lemma \ref{lem_moreau} (ii).
	
	$\bullet$ Now we prove the classical Moreau decomposition extension. 
	Consider a scalar parameter $\gamma > 0$, and substitute $ f $ above with $ \gamma f $, we arrive at
	\begin{equation}\label{eq_mor}
		\bm{v} =\,\textbf{Prox}_{\gamma f} (\bm{v}) +  \textbf{Prox}_{(\gamma f)^*} (\bm{v}) 
		= \,\textbf{Prox}_{\gamma f} (\bm{v}) +  \textbf{Prox}_{\gamma f^* \frac{1}{\gamma}} (\bm{v}), 
	\end{equation}
	where the second equality is by invoking Lemma \ref{lem_moreau} (i).
	Recall the translation rule   $ \textbf{Prox}_{f \frac{1}{\gamma}}(\bm{v}) = {\gamma}\textbf{Prox}_{\frac{1}{\gamma^2}f} (  \frac{\bm{v}}{\gamma}) $ in Lemma 2.1. We obtain
	\begin{equation}
		\textbf{Prox}_{\gamma f^* \frac{1}{\gamma}} (\bm{v})	= \gamma\textbf{Prox}_{\frac{1}{\gamma} f^*} ( \frac{\bm{v}}{\gamma}).
	\end{equation}
	Substituting the above to \eqref{eq_mor}, we arrive at 
	\begin{equation}
		\bm{v} =\,\textbf{Prox}_{\gamma f} (\bm{v}) +  \gamma\textbf{Prox}_{\frac{1}{\gamma} f^*} ( \frac{\bm{v}}{\gamma}). 
	\end{equation}
	The proof is now concluded.

	\subsection{proof of Proposition 4.1 (convergence rate)}
	
	We will need the following lemma:
	\begin{lem}[strictly decreasing error]\label{lem_rate}
		Suppose an operator $ \mathcal{F} $  is $1/L$-cocoercive with  $ L \in (0,1) $.
		For a set of fixed-point iterates $ \{\bm{\psi}^k\} $  that converges to  $\bm{\psi}^\star\in \text{Fix} (\mathcal{F}) $, the following holds:
		\begin{equation}
			\frac{2-L}{L}\Vert \bm{\psi}^{k+1} - \bm{\psi}^\star \Vert^2 - \Vert \bm{\psi}^{k} - \bm{\psi}^\star \Vert^2 \leq -\Vert \bm{\psi}^{k+1} - \bm{\psi}^k  \Vert^2.
		\end{equation}
	\end{lem}
	\begin{proof}
		By Definition \ref{def_Lipschitz} (iv), one has
		\begin{align*}
			& \Vert \mathcal{F}\bm{\psi}^{k} - \mathcal{F}\bm{\psi}^\star  \Vert^2 \leq L \langle  \mathcal{F}\bm{\psi}^{k} - \mathcal{F}\bm{\psi}^\star , 
			\bm{\psi}^{k} -\bm{\psi}^\star\rangle \\
			\iff& \Vert \bm{\psi}^{k+1} - \bm{\psi}^\star \Vert^2 \leq L\langle  \bm{\psi}^{k+1} - \bm{\psi}^\star, \bm{\psi}^{k} -\bm{\psi}^\star\rangle \\
			\iff& 0\leq L\langle  \bm{\psi}^{k+1} - \bm{\psi}^\star,  \bm{\psi}^{k} -\bm{\psi}^{k+1} \rangle + (L-1) \Vert \bm{\psi}^{k+1} - \bm{\psi}^\star \Vert^2 \\
			\iff& 0\leq L\Vert \bm{\psi}^{k}  - \bm{\psi}^\star \Vert^2 -L\Vert \bm{\psi}^{k+1} - \bm{\psi}^\star \Vert^2 - L\Vert \bm{\psi}^{k+1} - \bm{\psi}^k\Vert^2  + 2(L-1) \Vert \bm{\psi}^{k+1} - \bm{\psi}^\star \Vert^2   \label{Pythagoras} \\		
			\iff& \frac{2-L}{L}\Vert \bm{\psi}^{k+1} - \bm{\psi}^\star \Vert^2 - \Vert \bm{\psi}^{k} - \bm{\psi}^\star \Vert^2 \leq -\Vert \bm{\psi}^{k+1} - \bm{\psi}^k  \Vert^2,
		\end{align*}
		where the  Pythagoras relation is invoked, i.e., 
		\begin{equation*}
			2\langle a-b, c-a \rangle = \Vert c - b \Vert^2 -  \Vert a - b \Vert^2  - \Vert c - a \Vert^2
		\end{equation*}
	\end{proof}

	$\bullet$ Now we are ready to prove Proposition 4.1, we would start with the Lipschitz constant  $ L \in (0,1) $ case, which states
	\begin{equation}\label{eq_rate1}
		\Vert \bm{\psi}^{k+1} - \bm{\psi}^k  \Vert^2 \leq (\frac{L}{2-L})^k \cdot\frac{ 1 - \frac{L}{2-L}}{1 -(\frac{L}{2-L})^{k+1} }\Vert \bm{\psi}^\star - \bm{\psi}^0  \Vert^2. 
	\end{equation}

	For light of notation, let $ a \eqdef \frac{L}{2-L} $.
	Lemma \ref{lem_rate} can be rewritten into
	\begin{equation}
		\Vert \bm{\psi}^{t+1} - \bm{\psi}^t \Vert^2  \leq \Vert \bm{\psi}^{t} - \bm{\psi}^\star \Vert^2 - \frac{1}{a}\Vert \bm{\psi}^{t+1} - \bm{\psi}^\star \Vert^2 	
	\end{equation}
	Therefore
	\begin{equation}\label{rate_1}
		\sum_{t=0}^{k} a^{-t} \Vert \bm{\psi}^{t+1} - \bm{\psi}^t  \Vert^2 \leq \Vert \bm{\psi}^\star - \bm{\psi}^0  \Vert^2.
	\end{equation}
	By Definition \ref{def_Lipschitz} (i), one has
	$ \Vert \bm{\psi}^{k+1} - \bm{\psi}^k  \Vert^2 \leq \Vert \bm{\psi}^{k} - \bm{\psi}^{k-1} \Vert^2 $.
	It follows that
	\begin{align}
		&	\sum_{t=0}^{k} a^{-t} \Vert \bm{\psi}^{k+1} - \bm{\psi}^k  \Vert^2 \leq \sum_{t=0}^{k} a^{-t} \Vert \bm{\psi}^{t+1} - \bm{\psi}^t\Vert^2 \label{eq_L1}\\
		\iff&	\frac{1-a^{-(k+1)}}{1-a^{-1}} \Vert \bm{\psi}^{k+1} - \bm{\psi}^k\Vert^2   \leq \sum_{t=0}^{k} a^{-t} \Vert \bm{\psi}^{t+1} - \bm{\psi}^t\Vert^2 \label{rate_2},
	\end{align}
	where we have used the geometric sum formula in the last line and let us note that  the quantity $  \Vert \bm{\psi}^{k+1} - \bm{\psi}^k  \Vert^2 $ is a constant under $ \sum_{t} $.
	
	Adding (\ref{rate_1}) and (\ref{rate_2}) yields the convergence rate bound
	\begin{equation}
		\Vert \bm{\psi}^{k+1} - \bm{\psi}^k\Vert^2   \leq    a^{k}\cdot\frac{1-a} {1-a^{k+1}}  \Vert \bm{\psi}^\star - \bm{\psi}^0  \Vert^2. 
	\end{equation}	
	Invoking $ a \eqdef \frac{L}{2-L} $ gives \eqref{eq_rate1}.

	\vspace{12pt}

	$\bullet$ The no strong assumption case corresponds to $ L = 1 $, which states
	\begin{equation}\label{rate_basic}
		\Vert \bm{\psi}^{k+1} - \bm{\psi}^k  \Vert^2 \leq \frac{1}{k+1} \Vert \bm{\psi}^\star - \bm{\psi}^0  \Vert^2.
	\end{equation}
	The proof follows the same strategy except we have $ a = 1 $ now. To avoid repeating, we can directly start from \eqref{rate_1} and \eqref{eq_L1}, which now gives
	\begin{align}
		\sum_{t=0}^{k} \Vert \bm{\psi}^{t+1} - \bm{\psi}^t  \Vert^2  &\leq \Vert \bm{\psi}^\star - \bm{\psi}^0  \Vert^2\\
		\sum_{t=0}^{k} \Vert \bm{\psi}^{k+1} - \bm{\psi}^k  \Vert^2  &\leq \sum_{t=0}^{k} \Vert \bm{\psi}^{t+1} - \bm{\psi}^t\Vert^2
	\end{align}

	Adding the above two gives
	\begin{equation}
		\Vert \bm{\psi}^{k+1} - \bm{\psi}^k  \Vert^2 \leq \frac{1}{k+1}\Vert \bm{\psi}^\star - \bm{\psi}^0  \Vert^2,
	\end{equation}
	which concludes the
	proof of Proposition 4.1

	\textbf{Remarks 1} (sharp rate): We can easily show that \eqref{eq_rate1} where $ L\in (0,1) $ is a shaper rate compared to \eqref{rate_basic} where $ L=1 $.
	
	To see this, rewrite \eqref{eq_rate1} into
	\begin{equation}
		\Vert \bm{\psi}^{k+1} - \bm{\psi}^k  \Vert^2 \leq \frac{1}{\sum_{t=0}^{k} (\frac{L}{2-L})^{-t}}	\Vert \bm{\psi}^\star - \bm{\psi}^0  \Vert^2. 
	\end{equation}
	Given $ L\in (0,1) $, we have $ \frac{L}{2-L} < 1$. Therefore, 
	\begin{equation}
		\sum_{t=0}^{k} (\frac{L}{2-L})^{-t} > \sum_{t=0}^{k} 1^{-t} = k+1
	\end{equation}
	Hence, the constant in bound \eqref{eq_rate1} is smaller than that of \eqref{rate_basic}, which implies a shaper rate.

	\textbf{Remarks 2} (sharpness examples): Even when $ L $ is very close to 1, bound \eqref{eq_rate1} could be much shaper than \eqref{rate_basic}. For example,
	(i) let $ L = 0.99,  k = 20 $,  bound \eqref{eq_rate1} has a constant roughly 0.0387, while bound \eqref{rate_basic} has a constant roughly 0.0476; 
	(ii) let $ L = 0.99,  k = 100 $,  bound \eqref{eq_rate1} has a constant roughly 0.0031, while bound \eqref{rate_basic} has a constant roughly  0.0099.

	\subsection{proof of Lemma 4.1}
	Let $ (\bm{x}^\star,\bm{z}^\star,\bm{\lambda}^\star) $ be the optimal solution of ADMM solving problem $ f(\bm{x}) + g (\bm{x}) $.
	Then, by the updating rule of ADMM iterates, we have 
	\begin{equation}
		\bm{z}^\star =\,\, \textbf{Prox}_{g}(\bm{x}^\star + \bm{\lambda}^\star),
	\end{equation}
	which implies
	\begin{equation}
		\bm{x}^\star + \bm{\lambda}^\star - \bm{z}^{\star} \in \partial g(\bm{z}^\star).
	\end{equation}
	Substituting relation $ \bm{x}^\star = \bm{z}^\star $ to above yields
	\begin{equation}\label{lam_gradient}
		\bm{\lambda}^\star  \in \partial g(\bm{x}^\star).
	\end{equation}
	
	Since $ \bm{x}^\star $ is the optimal solution if and only if 
	\begin{equation}
		0 \in \partial f(\bm{x}^\star) + \partial g(\bm{x}^\star). 
	\end{equation} 
	Let $ \widetilde{\nabla} f(\bm{x}^\star) $, $ \widetilde{\nabla} g(\bm{x}^\star) $ be the actual choice of gradients that satisfy the above condition. 
	We obtain   
	\begin{equation}
		0 =  \widetilde{\nabla} f(\bm{x}^\star)  + \widetilde{\nabla} g(\bm{x}^\star).
	\end{equation}
	Invoking \eqref{lam_gradient} which states $ \bm{\lambda}^\star  =  \widetilde{\nabla} g(\bm{x}^\star) $, we arrive at
	\begin{equation}
		\bm{\lambda}^\star  = -\widetilde{\nabla} f(\bm{x}^\star),
	\end{equation}
	which concludes the proof.

	\subsection{proof of Proposition 4.2}
	Here, we aim to prove the following optimal matrix step-size result for ADMM:
	\begin{equation}
		\bm{M}^\star = \text{Diag}(\bm{d}^\star), \quad
		d_i^\star = \begin{cases}
			\vert\lambda_i^\star/x_i^\star\vert  &  \quad{x}_i^\star \neq 0, \\
			+\infty & 	\quad{x}_i^\star = 0,
		\end{cases} 
	\end{equation}
	
	To start, let us note that a diagnal matrix step-size $ \bm{M} \eqdef \text{Diag}(\bm{d}) \succ 0$ can be decomposed into 
	\begin{equation}
		\bm{M} =  \bm{D}\bm{D}^\text{T},
	\end{equation}
	where $ \bm{D} \eqdef\text{Diag}(\bm{z}) \bm{Q}$,  $ \bm{z}^\text{T}\bm{z} \eqdef \bm{d} $, and where $ \bm{Q} $ is an orthonormal matrix, i.e., $ \bm{Q}\bm{Q}^\text{T} = \bm{I}$.

	The  ADMM fixed-point from the new proximal operator is given by $ \bm{\psi}^\star = \bm{D}{\bm{x}}^\star + (\bm{D}^{-1})^\text{T}{\bm{\lambda}}^\star$.
	The optimization step-size is therefore determined by the following problem: 
	\begin{equation}
		\underset{\bm{D}}{\text{argmin}} \,\,  \Vert\bm{D}\bm{x}^\star +  (\bm{D}^{-1})^\text{T}\bm{\lambda}^\star \Vert^2 = 
		\underset{\bm{D}}{\text{argmin}} \,\,  \Vert\bm{D}\bm{x}^\star\Vert^2 + \Vert (\bm{D}^{-1})^\text{T}\bm{\lambda}^\star \Vert^2. 
	\end{equation}
	It follows that
	\begin{align}
		\Vert\bm{D}\bm{x}^\star\Vert^2 + \Vert (\bm{D}^{-1})^\text{T}\bm{\lambda}^\star \Vert^2
		=\, &   \Vert \text{Diag}(\bm{z})\bm{Q}\bm{x}^\star\Vert^2 + \Vert \bm{Q}(\text{Diag}(\bm{z}))^{-1} \bm{\lambda}^\star \Vert^2\\
		=\, &   \Vert \text{Diag}(\bm{z})\bm{x}^\star\Vert^2 + \Vert (\text{Diag}(\bm{z}))^{-1}\bm{\lambda}^\star \Vert^2\nonumber\\
		=\, &   \sum_{i}\vert z_i {x}^\star_i  \vert^2 +  \sum_{i} \vert{\lambda}^\star_i / z_i  \vert^2 \nonumber\\
		=\, &  \sum_{i} d_i\vert{x}^\star_i\vert^2 +  \sum_{i} \vert{\lambda}^\star_i \vert^2/d_i.\nonumber\label{N_problems}  
	\end{align}
	Let us note that the above sum is minimized if and only if it is element-wisely minimized. That said, the $ i $-th optimal entry $ d_i^\star $ can be found via
	\begin{equation}
		\underset{d_i>0}{\text{argmin}} \,\, d_i\vert{x}^\star_i\vert^2 + \vert{\lambda}^\star_i \vert^2/d_i,
	\end{equation}
	where the minimum is obtained if and only if $  d_i\vert{x}^\star_i\vert^2 = \vert{\lambda}^\star_i \vert^2/d_i $. This gives
	\begin{equation}
		d_i^\star = \frac{\vert\lambda^\star_i\vert}{\vert{x}^\star_i\vert}. 
	\end{equation}
	Let us note that the denominator $ {x}^\star_i $ could be zero, which corresponds to vector $ \bm{x} $ being sparse. In which case, it yields $ d_i^\star =   +\infty$. The objective value is hence given by $ d_i\vert{x}^\star_i\vert^2 + \vert{\lambda}^\star_i \vert^2/d_i =  0 $, where we treat $ +\infty $ times $ 0 $ to be zero. In practice, one only needs to set $ d_i $ being relatively large, and the term would be negligible.

	\subsection{proof of bounds of X in Section 5.1}
	Here, we aim to prove the following relation as in eq. (5.3):
	\begin{equation}
		\sqrt{N+1} \leq \Vert\bm{X}^\star\Vert \leq N+1.
	\end{equation}
	
	To show the above relation, recall the BQP is given by
	\begin{align}
		\underset{\bm{X}}{\text{minimize}}\quad  & \langle \bm{X}, \bm{C} \rangle \nonumber\\
		\text{subject\,to}\quad 
		&  \text{diag}(\bm{X})= \bm{1}_{(N+1)\times 1} \nonumber\\
		&  \bm{X} \succeq 0.
	\end{align}
	The first constraint states that $ \text{trace}(\bm{X}^\star) = N+1 $. That is, 	the sum of eigenvalues of $ \bm{X} $ equals to $ N+1 $. The second constraint states that $ \bm{X}^\star $ is positive semidefinite, i.e., it has non-negative eigenvalues. 
	
	Then, we have that $ \Vert\bm{X}^\star\Vert $ is at most $ N+1 $, which corresponds to $ \bm{X}^\star $ contains a single eigenvalue of $ N+1 $; it is at least $ \sqrt{N+1}  $,  which corresponds to $ \bm{X}^\star $ contains $ N+1 $ equal eigenvalues, each of value $ 1 $. The proof is now concluded.

	\subsection{proof of bounds of X in Section 5.2}
	Here, we aim to prove the following relation as in eq. (5.5):
	\begin{equation}
		(N+1)\sqrt{K} m_{avg}  \le \Vert\bm{X}^\star\Vert < (N+1)K m_{avg} ,
	\end{equation}
	where $ m_{avg} \eqdef \frac{1}{K}\sum_{k=1}^K|c_{k}| $.

	To show the above relation, first recall the SR is given by
	\begin{align}
		\underset{\bm{X}}{\text{minimize}}\quad  & \langle \bm{X}, \bm{C} \rangle   \nonumber\\
		\text{subject\,to}\quad 
		&  x_j =  x_j^\star, \: \forall j\in\Omega  \nonumber\\
		& \bm{X} \succeq 0,
	\end{align}
	where $ \bm{X} \eqdef \left[\begin{array}{cc}\mathcal{T}(\bm{u}) & \bm{x}\\\bm{x}^\text{H} & t\end{array}\right]$, 
	$ \bm{C} \eqdef  \left[\begin{array}{cc} \frac{1}{2N}\bm{I}_N & 0 \\ 0  &  \frac{1}{2} \end{array} \right]$,  and where $ \mathcal{T} $ is a Toeplitz operator.
		
	Due to the presence of the Toeplitz operator $ \mathcal{T} $,  the following Vandermonde decomposition (see e.g., \cite{tang2013compressed}) holds:
	\begin{equation}
		\bm{X}^\star = \sum_{k=1}^K|c_{k}|   \left[\begin{array}{c}\bm{a}(\tau_{k}) \\ 1 \end{array}\right] \left[\begin{array}{c}\bm{a}(\tau_{k}) \\ 1 \end{array}\right]^\text{H},
	\end{equation} 
	where
	$ \bm{a}(\tau_{k}) \eqdef \left[\begin{array}{ccc} e^{i2\pi(0)\tau_{k}} & \cdots & e^{i2\pi(N-1)\tau_{k}}\end{array}\right]^{\text{T}} $.
	
	By definition, a Toeplitz matrix has all its diagonal elements being the same. Then, following from above, we have
	\begin{equation}
	\text{trace}(\bm{X}^\star) = (N+1) \sum_{k=1}^K|c_{k}|.	
	\end{equation}
	Meanwhile, by the second constraint from SR formulation, we have $ \bm{X}^\star $ is  positive semidefinite, i.e.,   it has non-negative eigenvalues.

	For SR, the number of eigenvalues of $ \bm{X}^\star $ is fixed to $ K $, i.e., $  \text{rank}(\bm{X}^\star) = K$. Then, the upper bound (but not attainable) of $ \Vert\bm{X}^\star\Vert $ is given by 
	$ (N+1) \sum_{k=1}^K|c_{k}| $, which corresponds to $ \bm{X}^\star $ contains a single eigenvalue of $ (N+1) \sum_{k=1}^K|c_{k}| $; 
	the lower bound is given by $ (N+1)/\sqrt{K} \sum_{k=1}^K|c_{k}|  $,  which corresponds to $ \bm{X}^\star $ contains $ K $ equal eigenvalues, each of value $ (N+1)/K \sum_{k=1}^K|c_{k}| $. 

	Substituting  definition $ m_{avg} \eqdef \frac{1}{K}\sum_{k=1}^K|c_{k}| $  concludes the proof.

		\subsection{proof of Lagrange analysis in Section 5.1} \label{appendix_Lag1}
	The Boolean quadratic program  can be written  as	
	\begin{align}
		\underset{\bm{X}}{\text{minimize}}\quad  & \langle \bm{X}, \bm{C} \rangle \nonumber\\
		\text{subject\,to}\quad 
		&  \text{diag}(\bm{X})= \bm{1}_{(N+1)\times 1} \nonumber\\
		&  \bm{X} \succeq 0. 
	\end{align}
	We can form the following Lagrangian:
	\begin{equation}
		\mathcal{L} = \langle \bm{X}, \bm{C} \rangle + \langle \bm{\mu}, \text{diag}(\bm{X}) - \bm{1}_{(N+1)\times 1} \rangle + \langle \bm{\Lambda}, \bm{X} \rangle,
	\end{equation}
	where $ \text{diag}(\cdot) $ denotes the operation of taking the diagonal elements of a matrix input.
	
	Minimize the above w.r.t. $ \bm{X} $ gives an optimality condition
	\begin{equation}\label{BQP_dual}
		\bm{C} + \text{Diag}(\bm{\mu}^\star) + \bm{\Lambda}^\star = 0,
	\end{equation}
	where $ \text{Diag}(\cdot) $ denotes the operation of creating a diagonal matrix of a vector input. 
	The proof is now concluded.

	\subsection{proof of Lagrange analysis in Section 5.2} \label{appendix_Lag2}
	The super-resolution problem  can be written  as	
	\begin{align}
		\underset{\bm{X}}{\text{minimize}}\quad  & \langle \bm{X}, \bm{C} \rangle \nonumber\\
		\text{subject\,to}\quad 
		&  \bm{X}_\Omega = \bm{X}^\star_\Omega \nonumber\\
		&  \bm{X} \succeq 0,
	\end{align}
	where the entries of $ \bm{X}_\Omega $ are zeros if they are not in the support set $ \Omega $.
	
	We can form the following Lagrangian:
	\begin{equation}
		\mathcal{L} = \langle \bm{X}, \bm{C} \rangle + \langle \bm{M},  \bm{X}_\Omega - \bm{X}^\star_\Omega \rangle + \langle \bm{\Lambda}, \bm{X} \rangle.
	\end{equation}

	Minimize the above w.r.t. $ \bm{X} $ gives an optimality condition
	\begin{equation}
		\bm{C} + \bm{M}_\Omega + \bm{\Lambda}^\star = 0,
	\end{equation}
	where the entries of $ \bm{M}_\Omega $ are zeros if they are not in the support set $ \Omega $.
	The proof is now concluded.

	\section{Additional experiments}
	In this section, we present some additional experiments results with different data settings. For reproduction purpose, unless specified, we fix the MATLAB random number generator using command: rng(`default'). The underlying best fixed step-size choice $\gamma_{best}$ is obtained by exhaustive search. This value may contain ambiguity since there may exist different choices with  same iteration number. 
	  We calculate the MSE using the `ground truth' generated from the solver CVX \cite{grant11} in a best precision mode, and the threshold is set to $ 10^{-10} $.

	\subsection{BQP}
	
	For the Boolean quadratic program, we have $ \bm{A}\in\mathbb{R}^{K\times N} $, $ \bm{b}\in\mathbb{R}^{K\times 1} $. The pre-fixed step-size is given by	$ \gamma_{prefix} = {\Vert\bm{C}\Vert}/(N+1)$; the adaptive step-size is given by $ \gamma_{adap} = \Vert\bm{\Lambda}^k\Vert/\Vert\bm{X}^k\Vert$ with initialization set to be 1; the theoretical optimal step-size  is given by $ \gamma_{theo} = \Vert\bm{\Lambda}^\star\Vert/\Vert\bm{X}^\star\Vert$.

	\begin{table}[H]
		\caption{ performance of different step-sizes $\gamma$ in BQP}
		\begin{tabular}{ccccllllllc}
			\toprule
			&     &	&		&\multicolumn{2}{c}{$\gamma_{best}$}&\multicolumn{2}{c}{$\gamma_{prefix}$}
			&\multicolumn{2}{c}{$\gamma_{theo}$}	&$\gamma_{adap}$\\
			\cmidrule(r){5-6}\cmidrule(r){7-8}\cmidrule(r){9-10}\cmidrule(r){11-11}
			K    & N   	&$ \bm{A} $			&$\bm{b} $		& val		& itr		& val		& itr	& val		& itr		& itr\\
			\midrule
			10   & 10    	&$ \mathcal{N}(0, 1e^{-4}) $ &$ \mathcal{N}(0, 1) $& $ 1.54e^{-4} $& 116& $ 1.16e^{-4} $& 149 & $ 2.56e^{-4} $& 197		& 209\\
			10   & 10    	&$ \mathcal{N}(0, 1e^{-3}) $ &$ \mathcal{N}(0, 1) $& $ 1.19e^{-3} $& 129& $ 1.16e^{-3} $& 139 & $ 2.56e^{-3} $& 212		& 250\\
			10   & 10       &$ \mathcal{N}(0, 1e^{-2}) $ &$ \mathcal{N}(0, 1) $& $ 1.04e^{-2} $& 168
			& $ 1.17e^{-2} $& 206 & $ 2.56e^{-2} $& 374		& 431\\
			10   & 10      &$ \mathcal{N}(0, 1e^{-1}) $ &$ \mathcal{N}(0, 1) $& $ 1.09e^{-1} $& 132
			& $ 1.31e^{-1} $& 141    & $ 2.79e^{-1} $& 311		& 370\\
			10   & 10      &$ \mathcal{N}(0, 1) $ &$ \mathcal{N}(0, 1) $ & $ 2.67 $& 73
			& $ 6.09 $& 159   & $ 7.77 $& 204		& 200\\
			
			20   & 10    	& $ \mathcal{N}(0, 1e^{-4}) $ &$ \mathcal{N}(0, 1) $
			& $ 2.27e^{-4} $& 97	   & $ 1.24e^{-4} $& 173		& $ 2.74e^{-4} $& 162		& 155\\
			
			20   & 10    	& $ \mathcal{N}(0, 1e^{-3}) $ &$ \mathcal{N}(0, 1) $
			& $ 2.08e^{-3} $& 100	   & $ 1.24e^{-3} $& 132		& $ 2.74e^{-3} $& 120		& 173\\
			
			20   & 10    	& $ \mathcal{N}(0, 1e^{-2}) $ &$ \mathcal{N}(0, 1) $
			& $ 2.26e^{-2} $& 98	   & $ 1.24e^{-2} $& 173		& $ 2.74e^{-2} $& 132		& 222\\
			
			20   & 10    	& $ \mathcal{N}(0, 1e^{-1}) $ &$ \mathcal{N}(0, 1) $
			& $ 9.56e^{-2} $& 138	   & $ 1.51e^{-1} $& 200		& $ 2.80e^{-1} $& 377		& 458\\
			
			20   & 10    	& $ \mathcal{N}(0, 1) $ &$ \mathcal{N}(0, 1) $
			& $ 3.51 $  & 83   & $ 8.69 $& 111		& $ 9.54 $ & 118		& 168\\
			
			10   & 10    	& $ \mathcal{N}(0, 1) $ &$ \mathcal{N}(0, 1e^{4}) $
			& $ 1.54e^{4} $ & 116	   & $ 1.16e^{4} $& 149	& $ 2.56e^{4} $& 197		& 207\\
			
			10   & 10    	& $ \mathcal{N}(0, 1) $ &$ \mathcal{N}(0, 1e^{3}) $
			& $ 1.06e^{3} $ & 129	   & $ 1.16e^{3} $& 139	& $ 2.56e^{3} $& 212		& 200\\
			
			10   & 10    	& $ \mathcal{N}(0, 1) $ &$ \mathcal{N}(0, 1e^{2}) $
			& $ 8.34e^{2} $ & 165	   & $ 1.17e^{2} $& 206		& $ 2.56e^{2} $& 374		& 338\\
			
			10   & 10    	& $ \mathcal{N}(0, 1) $ &$ \mathcal{N}(0, 1e^{1}) $
			& $ 1.10e^{1} $ & 132	   & $ 1.31e^{1} $& 141		& $ 2.79e^{1} $& 311		& 274\\
			
			20   & 10    	& $ \mathcal{N}(0, 1) $ &$ \mathcal{N}(0, 1e^{4}) $
			& $ 2.27e^{4} $& 97	   & $ 1.24e^{4} $& 173		& $ 2.74e^{4} $& 162		& 165\\
			
			20   & 10    	& $ \mathcal{N}(0, 1) $ &$ \mathcal{N}(0, 1e^{3}) $
			& $ 2.41e^{3} $& 97	   & $ 1.24e^{3} $& 132		& $ 2.74e^{3} $& 120		& 138\\
			
			20   & 10    	& $ \mathcal{N}(0, 1) $ &$ \mathcal{N}(0, 1e^{2}) $
			& $ 2.26e^{2} $& 98	   & $ 1.24e^{2} $& 173	& $ 2.74e^{2} $& 132		& 147\\
			
			20   & 10    	& $ \mathcal{N}(0, 1) $ &$ \mathcal{N}(0, 1e^{1}) $
			& $ 9.50 $& 138	   & $ 1.50e^{1} $& 199	& $ 2.80e^{1} $& 376		& 383\\
			\bottomrule
		\end{tabular}
	\end{table}
	
	\subsection{SR}

	\begin{table}[H]
		\caption{ performance of different step-sizes $\gamma$ in SR}
		\begin{tabular}{ccccllllllc}
			\toprule
			&     &	&		&\multicolumn{2}{c}{$\gamma_{best}$}&\multicolumn{2}{c}{$\gamma_{prefix}$}
			&\multicolumn{2}{c}{$\gamma_{theo}$}	&$\gamma_{adap}$\\
			\cmidrule(r){5-6}\cmidrule(r){7-8}\cmidrule(r){9-10}\cmidrule(r){11-11}
			K    & N   	& $ \vert \Omega \vert/N$			& $\bm{c} $		& val		& itr		& val		& itr	& val		& itr		& itr\\
			\midrule
			2   & 20    	& 0.5 &$ \mathcal{N}(0, 1e^{-2}) $
			& $ 5.89e^{-1} $& 111	   & $ 2.16 $ & 134	& $ 1.28 $& 115		& 88\\
			
			2   & 20    	& 0.5 &$ \mathcal{N}(0, 1e^{-1}) $
			& $ 6.37e^{-2} $& 140	   & $ 2.16e^{-1} $& 203	& $ 1.28e^{-1} $& 185		& 157\\
			
			2   & 20    	& 0.5 &$ \mathcal{N}(0, 1) $
			& $ 6.21e^{-3} $& 166	   & $ 2.16e^{-2} $& 273	& $ 1.28e^{-2} $& 233		& 236\\
			
			2   & 20    	& 0.5 &$ \mathcal{N}(0, 1e^1) $
			& $ 6.08e^{-4} $& 219	   & $ 2.16e^{-3} $& 343	& $ 1.28e^{-3} $& 303		& 317\\
			
			2   & 20    	& 0.5 &$ \mathcal{N}(0, 1e^2) $
			& $ 6.28e^{-5} $& 255	   & $ 2.16e^{-4} $& 435	& $ 1.28e^{-4} $& 351		& 407\\

			2   & 20    	& 0.9 &$ \mathcal{N}(0, 1e^{-2}) $
			& $ 6.08e^{-1} $ & 49	   & $ 2.16 $& 133	& $ 1.12 $& 94		& 104\\
			
			2   & 20    	& 0.9 &$ \mathcal{N}(0, 1e^{-1}) $
			& $ 4.16e^{-2} $ & 87	   & $ 2.16e^{-1} $& 183	& $ 1.12e^{-1} $& 143		& 152\\
			
			2   & 20    	& 0.9 &$ \mathcal{N}(0, 1) $
			& $ 3.99e^{-3} $ & 122	   & $ 2.16e^{-2} $& 233	& $ 1.12e^{-2} $& 193		& 185\\
			
			2   & 20    	& 0.9 &$ \mathcal{N}(0, 1e^1) $
			& $ 4.39e^{-4} $ & 154	   & $ 2.16e^{-3} $& 298	& $ 1.12e^{-3} $& 227		& 272\\
			
			2   & 20    	& 0.9 &$ \mathcal{N}(0, 1e^2) $
			& $ 3.75e^{-5} $ & 188	   & $ 2.16e^{-4} $& 363	& $ 1.12e^{-4} $& 292		& 306\\
			
			5   & 20    	& 0.7 &$ \mathcal{N}(0, 1e^{-1}) $
			& $ 3.41e^{-1} $ & 253	   & $ 1.37e^{-1} $& 603	& $ 3.41e^{-1} $& 253		& 251\\
			
			5   & 20    	& 0.7 &$ \mathcal{N}(0, 1) $
			& $ 3.32e^{-2} $ & 316	   & $ 1.37e^{-2} $& 837	& $ 3.41e^{-2} $& 316		& 339\\
			
			5   & 20    	& 0.7 &$ \mathcal{N}(0, 1e^1) $
			& $ 3.27e^{-3} $ & 403	   & $ 1.37e^{-3} $& 1064	& $ 3.41e^{-3} $& 418		& 457\\
			
			10   & 100    	& 0.7 &$ \mathcal{N}(0, 1e^{-1}) $
			& $ 1.97e^{-2} $ & 556	   & $ 1.97e^{-2} $ & 556	& $ 1.79e^{-2} $& 600		& 849 \\
			
			10   & 100    	& 0.7 &$ \mathcal{N}(0, 1) $
			& $ 2.25e^{-3} $ & 779	   & $ 1.97e^{-3} $& 862	& $ 1.79e^{-3} $& 954		& 1042 \\
			
			10   & 100    	& 0.7 &$ \mathcal{N}(0, 1e^1) $
			& $ 2.41e^{-4} $ & 995	   & $ 1.97e^{-4} $& 1202	& $ 1.79e^{-4} $& 1333		& 1784 \\
			\bottomrule
		\end{tabular}
	\end{table}

		For the super-resolution problem, we have $ K $ spikes and their amplitudes are $ \bm{c}\in\mathbb{R}^{K\times 1} $. Also, data $ \bm{x}\in\mathbb{R}^{N\times 1} $. The pre-fixed step-size is given by	$ \gamma_{prefix} = {\Vert\bm{C}\Vert}/((N+1)\sqrt{K}\sigma\sqrt{2/\pi})$; the adaptive step-size is given by $ \gamma_{adap} = \Vert\bm{\Lambda}^k\Vert/\Vert\bm{X}^k\Vert$ with initialization set to be 1; the theoretical optimal step-size  is given by $ \gamma_{theo} = \Vert\bm{\Lambda}^\star\Vert/\Vert\bm{X}^\star\Vert$. Recall $ \Omega $ denotes the support set and its size is denoted as  $ \vert \Omega \vert $. The ratio $ \vert \Omega \vert/ N $ shows the percentage of entries observed (partial observation).

	\begin{remark}[Special cases]
		(i) In the first line, we see that the adaptive step-size choice $\gamma_{adap}$ outperforms the best fixed step-size choice $\gamma_{best}$. Also, the theoretical choice $\gamma_{theo}$ has almost the same iteration number complexity as $\gamma_{best}$. (ii) For the case where $ K=5, N=20, \vert \Omega \vert/N = 0.7 $, the first two settings shows that $\gamma_{theo}$ has the same performance as $\gamma_{best}$. This shows that the underlying best choice is in fact attainable by the theoretical choice $\gamma_{theo}$, which is derived from a convergence rate upper bound.
	\end{remark}

	\subsection{LASSO}
	For the LASSO problem, it handles problem: minimize $  1/2|| \bm{Ax - b} ||_2^2 + \alpha || \bm{x} ||_1 $.
	We  adopt the data settings from  \cite{boyd12}. We also adopt their random number fixing approach, using command: `randn('seed',0); rand('seed',0)'.
	It first generates a sparse ground-truth $ \bm{x}^\star $, where the sparsity density is controlled by factor $ p \in (0,1)$. Then, it generates $ \bm{b} = \bm{A}\bm{x}^\star + \bm{\epsilon} $, where $ \bm{\epsilon} $ can be viewed as a noise term and where $ \bm{A} $ is normalized. 
	$ \alpha $ is fixed to  $ 0.1\Vert \bm{A}^T \bm{b} \Vert_\infty$.

	To handle the non-smooth term $ || \cdot||_1 $, we rewrite the above problem to the following constrained form:
	\begin{align}
		\underset{\bm{x},\bm{z}}{\text{minimize}}\,\,\quad &    1/2|| \bm{Ax - b} ||_2^2 + \alpha || \bm{z} ||_1 ,\nonumber\\
		\text{subject\,to}\:\quad & \:\,\,\,\bm{x} - \bm{z} = 0,
	\end{align}
	where $ \bm{A}\in\mathbb{R}^{K\times N} $, $ \bm{x}\in\mathbb{R}^{N\times 1} $,  $ \bm{b}\in\mathbb{R}^{K\times 1} $, $ \bm{z}\in\mathbb{R}^{N\times 1} $.
	
	This implies that the fixed-point is given by  $ \sqrt{\gamma}\bm{x}^\star  + \bm{\lambda}^\star /\sqrt{\gamma} $. Hence, we set the theoretical optimal step-size  to be $ \gamma_{theo} = \Vert\bm{\lambda}^\star\Vert/\Vert\bm{x}^\star\Vert$ and the adaptive step-size to be $ \gamma_{adap} = \Vert\bm{\lambda}^k\Vert/\Vert\bm{x}^k\Vert$ with initialization set to be 1.
	\begin{table}[H]
		\caption{ performance of different step-sizes $\gamma$ in LASSO}
		\begin{tabular}{ccccllclllc}
			\toprule
			&     &	&		&\multicolumn{2}{c}{$\gamma_{best}$}&\multicolumn{2}{c}{$\gamma_{1}$}
			&\multicolumn{2}{c}{$\gamma_{theo}$}	&$\gamma_{adap}$\\
			\cmidrule(r){5-6}\cmidrule(r){7-8}\cmidrule(r){9-10}\cmidrule(r){11-11}
			K    & N   	& $ p $			& $ \bm{\epsilon} $		& val		& itr		& val		& itr	& val		& itr		& itr\\
			\midrule
			20   & 100    	& 0.1 &$ \mathcal{N}(0, 1e^{-2}) $
			& $ 8.96e^{-1} $& 115	 &$ 1 $& 165	  & $ 6.61e^{-1} $  	&132 	& 136\\
			
			20   & 100    	& 0.1 &$ \mathcal{N}(0, 1e^{-1}) $
			& $ 8.13e^{-1} $& 112	 &$ 1 $& 118	  & $ 6.11e^{-1} $  	&136 	& 147\\
			
			20   & 100    	& 0.1 &$ \mathcal{N}(0, 1) $
			& $ 3.26e^{-1} $& 183	 &$ 1 $& 1030	  & $ 5.54e^{-1} $  	&531 	& 534\\
			
			20   & 100    	& 0.1 &$ \mathcal{N}(0, 1e^{1}) $
			& $ 5.65e^{-1} $& 139	 &$ 1 $& 539	  & $ 5.89e^{-1} $  	&213 	& 228\\
			
			20   & 100    	& 0.1 &$ \mathcal{N}(0, 1e^{2}) $
			& $ 6.13e^{-1} $& 176	 &$ 1 $& 314	  & $ 5.75e^{-1} $  	&194 	& 203\\

			50   & 100    	& 0.1 &$ \mathcal{N}(0, 1e^{-1}) $
			& $ 1.11 $& 32	 &$ 1 $& 33	  & $ 4.38e^{-1} $  	&64 	& 66\\
			
			50   & 100    	& 0.1 &$ \mathcal{N}(0, 1) $
			& $ 4.94e^{-1} $& 66	 &$ 1 $& 310	  & $ 3.81e^{-1} $  	&71 	& 68\\
			
			50   & 100    	& 0.1 &$ \mathcal{N}(0, 1e^{1}) $
			& $ 3.70e^{-1} $& 105	 &$ 1 $& 428	  & $ 4.06e^{-1} $  	&131 	& 135\\
			
			50   & 100    	& 0.1 &$ \mathcal{N}(0, 1e^{2}) $
			& $ 3.53e^{-1} $& 110	 &$ 1 $& 372	  & $ 4.13e^{-1} $  	&122 	& 125\\

			20   & 100    	& 0.6 &$ \mathcal{N}(0, 1e^{-1}) $
			& $ 8.81e^{-1} $& 92	 &$ 1 $& 102	  & $ 6.84e^{-1} $  	&102 	& 107\\
			
			20   & 100    	& 0.6 &$ \mathcal{N}(0, 1) $
			& $ 5.21e^{-1} $& 148	 &$ 1 $& 473	  & $ 7.11e^{-1} $  	&302 	& 317\\
			
			20   & 100    	& 0.6 &$ \mathcal{N}(0, 1e^{1}) $
			& $ 3.69e^{-1} $& 306	 &$ 1 $& 582	  & $ 4.88e^{-1} $  	&332 	& 351\\
			
			50   & 100    	& 0.6 &$ \mathcal{N}(0, 1e^{-1}) $
			& $ 4.38e^{-1} $& 69	 &$ 1 $& 291	  & $ 6.17e^{-1} $  	&161 	& 167\\
			
			50   & 100    	& 0.6 &$ \mathcal{N}(0, 1) $
			& $ 2.12e^{-1} $& 127	 &$ 1 $& 736	  & $ 4.09e^{-1} $  	&280 	& 288\\
			
			50   & 100    	& 0.6 &$ \mathcal{N}(0, 1e^{1}) $
			& $ 3.44e^{-1} $& 87	 &$ 1 $& 351	  & $ 3.75e^{-1} $  	&94 	& 97\\

			50   & 300    	& 0.6 &$ \mathcal{N}(0, 1) $
			& $ 8.05e^{-1} $& 121	 &$ 1 $& 201	  & $ 8.52e^{-1} $  	&140 	& 145\\
			
			100   & 300    	& 0.6 &$ \mathcal{N}(0, 1) $
			& $ 3.90e^{-1} $& 116	 &$ 1 $& 349	  & $ 5.50e^{-1} $  	&156 	& 171\\
			
			200   & 300    	& 0.6 &$ \mathcal{N}(0, 1) $
			& $ 3.59e^{-1} $& 57	 &$ 1 $& 160	  & $ 3.76e^{-1} $  	&57 	& 60\\
			\bottomrule
		\end{tabular}
	\end{table}

	\pagebreak
	\subsection{LAD} 
	For the Least absolute deviations (LAD), it handles problem: minimize  $ 1/2|| \bm{Ax} - \bm{b} ||_1 $.
	We  adopt the type of data settings from  \cite{boyd12}. We also adopt their random number fixing approach, using command: `randn('seed',0); rand('seed',0)'.	
	It first randomly generates a ground-truth $ \bm{x}^\star $ from $ \mathcal{N}(0, 10) $ and $ \bm{A} $ from $ \mathcal{N}(0, 1) $.
	Then, $ \bm{b}^\star = \bm{A}\bm{x}^\star $, and some randomly chosen entries recorded by a set $\Omega$ is added by a noise term  $ \bm{\epsilon} $, which gives $ \bm{b}_\Omega = \bm{b}^\star_\Omega + \bm{\epsilon} $. The size of the set is denoted by $ \vert \Omega \vert $.
	Also, $ \bm{A}\in\mathbb{R}^{K\times N} $, $ \bm{x}\in\mathbb{R}^{N\times 1} $, $ \bm{b}\in\mathbb{R}^{K\times 1} $.
	
	To solve it via ADMM, we introduce an auxiliary variable $ \bm{z}^k \eqdef \bm{Ax}^k - \bm{b}$. This implies that the ADMM fixed-point is given by  $ \sqrt{\gamma}\bm{Ax}^\star  + \bm{\lambda}^\star /\sqrt{\gamma} $.
	Hence, we set the theoretical optimal step-size  to be $ \gamma_{theo} = \Vert\bm{\lambda}^\star\Vert/\Vert\bm{Ax}^\star\Vert$ and the adaptive step-size to be $ \gamma_{adap} = \Vert\bm{\lambda}^k\Vert/\Vert\bm{Ax}^k\Vert$ with initialization set to be 1.
	\begin{table}[H]
		\caption{ performance of different step-sizes $\gamma$ in LAD}
		\begin{tabular}{ccccllclllc}
			\toprule
			&     &	&		&\multicolumn{2}{c}{$\gamma_{best}$}&\multicolumn{2}{c}{$\gamma_{1}$}
			&\multicolumn{2}{c}{$\gamma_{theo}$}	&$\gamma_{adap}$\\
			\cmidrule(r){5-6}\cmidrule(r){7-8}\cmidrule(r){9-10}\cmidrule(r){11-11}
			K    & N   	& $ \vert \Omega \vert $		& $ \bm{\epsilon} $		& val		& itr		& val		& itr	& val		& itr		& itr\\
			\midrule
			500   & 100    	& 20 &$ \mathcal{N}(0, 1e^{2}) $
			& $ 2.75e^{-2} $& 21	 &$ 1 $& 36	  & $ 4.98e^{-3} $  	&40 	& 24\\
			
			500   & 100    	& 20 &$ \mathcal{N}(0, 1e^{3}) $
			& $ 2.09e^{-3} $& 25	 &$ 1 $& 210	  & $ 5.21e^{-3} $  	&25 	& 27\\
			
			500   & 100    	& 20 &$ \mathcal{N}(0, 5e^{3}) $
			& $ 4.21e^{-4} $& 28	 &$ 1 $& 997	  & $ 5.38e^{-3} $  	&30 	& 28\\
			
			500   & 100    	& 20 &$ \mathcal{N}(0, 1e^{4}) $
			& $ 1.21e^{-3} $& 29	 &$ 1 $& $ 1.98e^3 $	  & $ 5.39e^{-3} $  	&36 	& 29\\

			1000   & 100    	& 20 &$ \mathcal{N}(0, 1e^{2}) $
			& $ 1.61e^{-1} $& 13	 &$ 1 $& $ 17 $	  & $ 4.82e^{-3} $  	&58 	& 54\\
			
			1000   & 100    	& 20 &$ \mathcal{N}(0, 1e^{3}) $
			& $ 1.91e^{-2} $& 16	 &$ 1 $& $ 93 $	  & $ 5.24e^{-3} $  	&17 	& 18\\
			
			1000   & 100    	& 20 &$ \mathcal{N}(0, 1e^{4}) $
			& $ 3.97e^{-4} $& 18	 &$ 1 $& $ 854 $	  & $ 5.34e^{-3} $  	&21 	& 20\\

			500   & 100    	& 90 &$ \mathcal{N}(0, 1e^{2}) $
			& $ 2.23e^{-2} $& 36	 &$ 1 $& 78	  & $ 5.90e^{-3} $  	&74 	& 83\\
			
			500   & 100    	& 90 &$ \mathcal{N}(0, 1e^{3}) $
			& $ 8.58e^{-3} $& 43	 &$ 1 $& 557	  & $ 6.08e^{-3} $  	&45 	& 46\\
			
			500   & 100    	& 90 &$ \mathcal{N}(0, 1e^{4}) $
			& $ 2.46e^{-4} $& 50	 &$ 1 $& $ 5.35e^3 $	  & $ 6.19e^{-3} $  	&75 	& 51\\
			
			500   & 100    	& 200 &$ \mathcal{N}(0, 1e^{2}) $
			& $ 4.2e^{-2} $& 93	 &$ 1 $& 450	  & $ 7.80e^{-3} $  	&257 	& 310\\
			
			500   & 100    	& 200 &$ \mathcal{N}(0, 1e^{3}) $
			& $ 8.63e^{-3} $& 121	 &$ 1 $& $ 4.07e^3 $	  & $ 7.89e^{-3} $  	&140 	& 118\\
			
			500   & 100    	& 200 &$ \mathcal{N}(0, 1e^{4}) $
			& $ 9.29e^{-4} $& 145	 &$ 1 $& $ 4.02e^4 $	  & $ 7.91e^{-3} $  	&425 	& 182\\	
			\bottomrule
		\end{tabular}
	\end{table}

	\subsection{QP} 
		For the quadratic programming (QP), it handles problem: minimize $ 1/2 || \bm{Ax - b} ||_2^2$, s.t.  $a \leq \bm{x} \leq b $.
	We  adopt the type of data settings from  \cite{boyd12} except we do not intentionally promote a well-conditioned structure by manipulating the eigenvalues (we use completely random setting). We also adopt their random number fixing approach, using command: `randn('state', 0);rand('state', 0)'.
	All data is randomly generated from i.i.d Gaussian distribution, and $ \bm{A}\in\mathbb{R}^{N\times N} $, $ \bm{x}\in\mathbb{R}^{N\times 1} $, $ \bm{b}\in\mathbb{R}^{N\times 1} $.
	
	To solve it via ADMM, we introduce an auxiliary variable $ \bm{z}^k \eqdef  \bm{x}^k $. This implies that the ADMM fixed-point is given by $ \sqrt{\gamma}\bm{x}^\star  + \bm{\lambda}^\star /\sqrt{\gamma} $.
	Hence, we set the theoretical optimal step-size  to be $ \gamma_{theo} = \Vert\bm{\lambda}^\star\Vert/\Vert\bm{x}^\star\Vert$ and the adaptive step-size to be $ \gamma_{adap} = \Vert\bm{\lambda}^k\Vert/\Vert\bm{x}^k\Vert$ with initialization set to be 1.

	\begin{table}[H]
		\caption{ performance of different step-sizes $\gamma$ in QP}
		\begin{tabular}{cccllclllc}
			\toprule
			&     &			&\multicolumn{2}{c}{$\gamma_{best}$}&\multicolumn{2}{c}{$\gamma_{1}$}
			&\multicolumn{2}{c}{$\gamma_{theo}$}	&$\gamma_{adap}$\\
			\cmidrule(r){4-5}\cmidrule(r){6-7}\cmidrule(r){8-9}\cmidrule(r){10-10}
			N   		& $ \bm{A}$			& $\bm{b} $			& val		& itr		& val		& itr	& val		& itr		& itr\\
			\midrule
			50    	& $ \mathcal{N}(0, 1) $ &$ \mathcal{N}(0, 1) $
			& $ 3.68e^{1} $& 31	 &$ 1 $& 873	  & $ 2.55e^{1} $  	&36 	& 43\\
			
			50    	& $ \mathcal{N}(0, 5) $ &$ \mathcal{N}(0, 1) $
			& $ 7.92e^{2} $& 33	 &$ 1 $& $ 2.62e^{4} $	  & $ 7.58e^{2} $  	&35 	& 48\\
			
			50    	& $ \mathcal{N}(0, 1e^1) $ &$ \mathcal{N}(0, 1) $
			& $ 4.39e^{3} $& 31	 &$ 1 $& $ 8.96e^{4} $	  & $ 2.49e^{3} $  	&39 	& 50\\
			
			50    	& $ \mathcal{N}(0, 1) $ &$ \mathcal{N}(0, 5) $
			& $ 4.12e^{1} $& 35	 &$ 1 $& $ 1.19e^{3}  $	  & $ 3.98e^{1} $  	&36 	& 44\\
			
			50    	& $ \mathcal{N}(0, 1) $ &$ \mathcal{N}(0, 1e^1) $
			& $ 5.64e^{1} $& 30	 &$ 1 $& $ 1.45e^{3}  $	  & $ 6.89e^{1} $  	&38 	& 36\\
			
			50    	& $ \mathcal{N}(0, 1) $ &$ \mathcal{N}(0, 5e^1) $
			& $ 1.18e^{2} $& 21	 &$ 1 $& $ 1.88e^{3}  $	  & $ 3.02e^{2} $  	&38 	& 50\\
			
			50    	& $ \mathcal{N}(0, 1) $ &$ \mathcal{N}(0, 1e^2) $
			& $ 2.77e^{2} $& 13	 &$ 1 $& $ 2.07e^{3}  $	  & $ 6.22e^{2} $  	&17 	& 31\\
			
			50    	& $ \mathcal{N}(0, 5) $ &$ \mathcal{N}(0, 5) $
			& $ 7.88e^{2} $& 33	 &$ 1 $& $ 2.60e^{4} $	  & $ 7.40e^{2} $  	&37 	& 53\\

			50    	& $ \mathcal{N}(0, 1e^{-1} ) $ &$ \mathcal{N}(0, 1) $
			& $ 5.65e^{-1} $& 30	 &$ 1 $& 57	  & $ 6.88e^{-1} $  	&38 	& 37\\
			
			50    	& $ \mathcal{N}(0, 1e^{-2} ) $ &$ \mathcal{N}(0, 1) $
			& $ 2.77e^{-2} $& 13	 &$ 1 $& 162	  & $ 6.22e^{-2} $  	&17 	& 17\\
			
			50    	& $ \mathcal{N}(0, 1e^{-3} ) $ &$ \mathcal{N}(0, 1) $
			& $ 1.72e^{-3} $& 7	 &$ 1 $& $ 1.32e^{3}  $	  & $ 6.51e^{-3} $  	&12 	& 14\\
			
			50    	& $ \mathcal{N}(0, 1 ) $ &$ \mathcal{N}(0, 1e^{-1}) $
			& $ 3.14e^{1} $& 33	 &$ 1 $& $ 1.06e^{3}  $	  & $ 3.06e^{1} $  	&34 	& 41\\
			
			50    	& $ \mathcal{N}(0, 1 ) $ &$ \mathcal{N}(0, 1e^{-2}) $
			& $ 3.12e^{1} $& 33	 &$ 1 $& $ 1.06e^{3}  $	  & $ 3.08e^{1} $  	&34 	& 38\\
			
			50    	& $ \mathcal{N}(0, 1 ) $ &$ \mathcal{N}(0, 1e^{-3}) $
			& $ 3.12e^{1} $& 33	 &$ 1 $& $ 1.07e^{3}  $	  & $ 3.09e^{1} $  	&34 	& 40\\
			
			50    	& $ \mathcal{N}(0, 1e^{-1} ) $ &$ \mathcal{N}(0, 1e^{-1}) $
			& $ 3.15e^{-1} $& 33	 &$ 1 $& $ 126  $	  & $ 2.96e^{-1} $  	&37 	& 40\\

			300    	& $ \mathcal{N}(0, 1) $ &$ \mathcal{N}(0, 1) $
			& $ 1.49e^{2} $& 40	 &$ 1 $& $ 5.35e^{3} $	  & $ 1.60e^{2} $  	&40 	& 48\\
			
			300    	& $ \mathcal{N}(0, 5) $ &$ \mathcal{N}(0, 1) $
			& $ 3.91e^{3} $& 39	 &$ 1 $& $ 1.34e^{5} $	  & $ 3.88e^{3} $  	&40 	& 49\\
			
			300    	& $ \mathcal{N}(0, 1e^1) $ &$ \mathcal{N}(0, 1) $
			& $ 1.57e^{4} $& 39	 &$ 1 $& $ 5.36e^{5} $	  & $ 1.54e^{4} $  	&40 	& 50\\
			
			300    	& $ \mathcal{N}(0, 1) $ &$ \mathcal{N}(0, 1e^1) $
			& $ 1.91e^{2} $& 36	 &$ 1 $& $ 5.78e^{3} $	  & $ 2.18e^{2} $  	&36 	& 45\\
			
			300    	& $ \mathcal{N}(0, 1) $ &$ \mathcal{N}(0, 1e^2) $
			& $ 4.72e^{2} $& 29	 &$ 1 $& $ 1.05e^{4} $	  & $ 1.56e^{3} $  	&94 	& 104\\
			
			300    	& $ \mathcal{N}(0, 5) $ &$ \mathcal{N}(0, 5) $
			& $ 3.73e^{3} $& 40	 &$ 1 $& $ 1.33e^{5} $	  & $ 4.01e^{3} $  	&40 	& 52\\
			\bottomrule
		\end{tabular}
	\end{table}

	\begin{remark}[non-unique $\gamma_{best}$]
		As mentioned in the beginning of the section, there exists ambiguity of $\gamma_{best}$ in the sense that it is often not unique. That is,  if we  plot the iteration number complexity against different step-size choices, we would observe a flat bottom. As the experiment in the last line above, although $\gamma_{theo}$ has a very different value compared to $\gamma_{best}$, they share the same iteration number complexity. Meanwhile, it is worth noticing that this ambiguity is severe here mainly due to that the iteration number is too small.  
	\end{remark}

	\subsection{TV} 
	For the Total Variation (TV), it handles problem: minimize  $ (1/2)||\bm{x - b}||_2^2 + \alpha \sum_i |x_{i+1} - x_i| $.
	It is a denoising problem, and we adopt the type of data settings from  \cite{boyd12}.  We will use their random number fixing approach via command: `rand('seed', 0);randn('seed', 0)'. Particularly, $ \bm{b} = \bm{x}^\star + \bm{\epsilon} $, where $ \bm{\epsilon} $ is a noise term.

	TV can be written into the following constrained form:
	\begin{align}
		\underset{\bm{x},\bm{z}}{\text{minimize}}\,\,\quad &    1/2||\bm{x - b}||_2^2 + \alpha \Vert \bm{z} \Vert_1,\nonumber\\
		\text{subject\,to}\:\quad & \:\,\,\,\bm{D}\bm{x} - \bm{z} = 0,
	\end{align}
	where $ \Vert\bm{D}\bm{x}\Vert_1 \eqdef \sum_i |x_{i+1} - x_i|  $, and where $ \bm{D}\in\mathbb{R}^{N\times N} $, $ \bm{x}\in\mathbb{R}^{N\times 1} $,  $ \bm{b}\in\mathbb{R}^{N\times 1} $, $ \bm{z}\in\mathbb{R}^{N\times 1} $.

	Following from above, the ADMM fixed-point is given by $ \sqrt{\gamma}\bm{Dx}^\star  + \bm{\lambda}^\star /\sqrt{\gamma} $.
	Therefore, we set the theoretical optimal step-size  to be $ \gamma_{theo} = \Vert\bm{\lambda}^\star\Vert/\Vert\bm{Dx}^\star\Vert$ and the adaptive step-size to be $ \gamma_{adap} = \Vert\bm{\lambda}^k\Vert/\Vert\bm{Dx}^k\Vert$ with initialization set to be 1. Let us note that $ \alpha\Vert \bm{z} \Vert_1 $ is a denoising term and we tend to choose a slightly larger $\alpha$ when the noise level increases and vice versa. The default choice is set to $\alpha = 5$ when $\epsilon $ subject to $ \mathcal{N}(0,1) $ distribution as in \cite{boyd12}.
	\begin{table}[H]
		\centering
		\caption{ performance of different step-sizes $\gamma$ in TV}
		\begin{tabular}{cccllclllc}
			\toprule
			&     &			&\multicolumn{2}{c}{$\gamma_{best}$}&\multicolumn{2}{c}{$\gamma_{1}$}
			&\multicolumn{2}{c}{$\gamma_{theo}$}	&$\gamma_{adap}$\\
			\cmidrule(r){4-5}\cmidrule(r){6-7}\cmidrule(r){8-9}\cmidrule(r){10-10}
			N   		& $ \bm{\epsilon}$			& $\alpha $			& val		& itr		& val		& itr	& val		& itr		& itr\\
			\midrule
			50    	& $ \mathcal{N}(0, 1) $ & 5
			& $ 4.43 $& 107	 &$ 1 $& 452	  & $ 3.13 $  	&147 	& 149\\
			
			50    	& $ \mathcal{N}(0, 1e^{-1}) $ & 5
			& $ 3.04 $& 59	 &$ 1 $& 397	  & $ 3.10 $  	&81 	& 73\\
			
			50    	& $ \mathcal{N}(0, 1e^{-2}) $ & 5
			& $ 4.18 $& 120	 &$ 1 $& 160	  & $ 3.10 $  	&135 	& 136\\
			
			50    	& $ \mathcal{N}(0, 1e^{-2}) $ & 4
			& $ 1.31 $& 119	 &$ 1 $& 204	  & $ 2.31 $  	&148 	& 145\\
			
			50    	& $ \mathcal{N}(0, 1e^{1}) $ & 5
			& $ 8.76e^{-1} $& 42	 &$ 1 $& 45	  & $ 6.66e^{-1} $  	&51 	& 47\\
			
			50    	& $ \mathcal{N}(0, 5e^{1}) $ & 5
			& $ 3.17e^{-1} $& 25	 &$ 1 $& 62	  & $ 8.05e^{-2} $  	&78 	& 82\\
			
			50    	& $ \mathcal{N}(0, 5e^{1}) $ & 6
			& $ 2.94e^{-1} $& 26	 &$ 1 $& 61	  & $ 9.95e^{-2} $  	&67 	& 68\\

			100    	& $ \mathcal{N}(0, 1) $ & 5
			& $ 6.37 $& 199	 &$ 1 $& 1249	  & $ 4.47 $  	&279 	& 274\\
			
			100    	& $ \mathcal{N}(0, 1e^{-1}) $ & 5
			& $ 6.94 $& 213	 &$ 1 $& 1062	  & $ 3.56 $  	&321 	& 291\\
			
			100    	& $ \mathcal{N}(0, 1e^{-2}) $ & 5
			& $ 7.73 $& 220	 &$ 1 $& 628	  & $ 3.47 $  	&284 	& 274\\
			
			100    	& $ \mathcal{N}(0, 1e^{-2}) $ & 4
			& $ 7.28 $& 219	 &$ 1 $& 613	  & $ 2.64 $  	&306 	& 303\\
			
			100    	& $ \mathcal{N}(0, 1e^{1}) $ & 5
			& $ 8.68e^{-1} $& 43	 &$ 1 $& 45	  & $ 7.70e^{-1} $  	&48 	& 48\\
			
			100    	& $ \mathcal{N}(0, 1e^{2}) $ & 5
			& $ 2.79e^{-1} $& 26	 &$ 1 $& 67	  & $ 3.85e^{-2} $  	&153 	& 162\\
			
			100    	& $ \mathcal{N}(0, 1e^{2}) $ & 6
			& $ 2.88e^{-1} $& 27	 &$ 1 $& 67	  & $ 4.69e^{-2} $  	&130 	& 139\\
			
			200    	& $ \mathcal{N}(0, 1) $ & 5
			& $ 8.06 $& 242	 &$ 1 $& 1918	  & $ 4.59 $  	&419 	& 417\\
			
			200    	& $ \mathcal{N}(0, 0.5) $ & 5
			& $ 1.26e^{1} $& 370	 &$ 1 $& 4808	  & $ 4.34 $  	&1136 	& 1137\\
			
			200    	& $ \mathcal{N}(0, 5) $ & 5
			& $ 1.86 $& 71	 &$ 1 $& 131	  & $ 2.33 $  	&82 	&83\\
			
			300    	& $ \mathcal{N}(0, 1) $ & 5
			& $ 1.21e^{1} $& 300	 &$ 1 $& 3546	  & $ 5.22 $  	&682 	& 688\\
			
			400    	& $ \mathcal{N}(0, 1) $ & 5
			& $ 1.05e^{1} $& 233	 &$ 1 $& 2212	  & $ 5.03 $  	&449 	& 459\\
			
			800    	& $ \mathcal{N}(0, 1) $ & 5
			& $ 1.75e^{1} $& 489	 &$ 1 $& 8325	  & $ 7.21 $  	&1153 	& 1165\\
			\bottomrule
		\end{tabular}
	\end{table}

\end{document}